\documentclass[a4paper]{article}

\usepackage{amsfonts,amsmath,amsthm}
\usepackage{amssymb}
\usepackage{mathrsfs}

\theoremstyle{plain}
\newtheorem{thm}{Theorem}[section]
\newtheorem{lem}[thm]{Lemma}

\theoremstyle{definition}

\newtheorem{ex}[thm]{Example}
\newtheorem{rmk}[thm]{Remark}
\newtheorem*{ack}{Acknowledgement}

\numberwithin{equation}{section}

\newcommand{\R}{\mathbb{R}}
\newcommand{\Z}{\mathbb{Z}}

\newcommand{\K}{\rm{K}}
\newcommand{\W}{\rm{W}}
\newcommand{\kl}{\rm{(k)}}
\newcommand{\w}{\rm{(w)}}
\newcommand{\kkw}{\rm{(kkw)}}
\newcommand{\kww}{\rm{(kww)}}
\newcommand{\red}{\rm{red}}
\newcommand{\h}{\rm{high}}
\newcommand{\cubic}{\rm{c}}
\newcommand{\T}{\rm{T}}
\newcommand{\Op}{\rm{Op}}
\newcommand{\vp}{v^+}

\newcommand{\supp}{\mathrm{supp \, }}

\newcommand{\pa}{\partial}
\newcommand{\eps}{\varepsilon}
\newcommand{\tl}[1]{\widetilde{#1}}
\newcommand{\jb}[1]{\langle #1 \rangle}
\newcommand{\ab}[1]{\left| #1 \right|}
\newcommand{\no}[1]{\left\| #1 \right\|}
\newcommand{\op}[1]{\mathcal{#1}}

\title{Global existence for systems of nonlinear wave and Klein-Gordon equations in two space dimensions under a kind of the weak null condition}
\author{Minggang CHENG
		 \thanks{
              Department of Mathematics, Graduate School of Science, 
			Osaka University, Toyonaka, Osaka 560-0043, Japan. 
              Email: {\tt u718265c@ecs.osaka-u.ac.jp} 
             }
           }

\begin{document}
\date{}
\maketitle

\frenchspacing

\begin{abstract}

We consider the coupled systems of nonlinear wave and Klein-Gordon equations in two space dimensions with cubic nonlinearity.
For this kind of systems, the small data global existence is already known if the cubic nonlinearity satisfies a certain condition related to the null condition.
In this article, our aim is to investigate the small data global existence under a condition related to the weak null condition.
We also make a remark on the asymptotic behavior of global solutions.

\end{abstract}

\section{Introduction}

We investigate the Cauchy problem for coupled systems of nonlinear wave and Klein-Gordon equations in two space dimensions.

Let $u=(u_j)_{1 \le j \le N}$ be an $\R^{N}$-valued unknown function of $(t,x) \in (0, \infty) \times \R^2$.
In what follows, we always assume 
\begin{align}
	m_{j}>0 ~{\rm for}~ 1 \le j \le N_{0} ~~{\rm and}~~m_{j}=0 ~{\rm for}~ N_{0}+1 \le j \le N 
	\label{eq;mj}
\end{align}
with some $N_0 \in \{0,1,\ldots, N\}$, and we write $u=(v, w)$ with
\begin{align*}
	v=(v_{j})_{1 \leq j \leq N_{0}}=(u_{j})_{1 \leq j \leq N_{0}}, \ w=(w_{j})_{N_{0}+1 \leq j \leq  N}=(u_{j})_{N_{0}+1 \leq j \leq N},
\end{align*}
where we neglect the first term condition in \eqref{eq;mj} and $v$ (resp. the second term condition in \eqref{eq;mj} and $w$) if $N_0=0$ (resp. $N_0=N$).
We call $v_j$ as the Klein-Gordon component, and $w_j$ as the wave component.
Throughout this article, we write $\pa_0:=\pa_t=\pa/\pa t$, $\pa_{k}:=\pa/\pa x_{k}$ for $k=1, 2$.
The d'Alembertian is written as $\Box = \pa_{t}^{2} - \Delta$.
In this article, we focus on the following type of coupled system
\begin{align}
	(\Box+m_{j}^2) u_{j} = F_{j}(v, \pa u) \ \ \textrm{in} \ (0, \infty) \times \R^2 \ \textrm{for} \ j = 1, 2,\ldots,N, 
	\label{eq;nkgw}
\end{align}
where $\pa u=(\pa_a u)_{0 \le a \le 2}$.
We prescribe the initial condition by 
\begin{align}
	u_j (0,x)=\eps f_j(x), \ (\pa_t u_j)(0,x)= \eps g_j(x), \ \ \ x \in \R^{2}
	\label{eq;id}
\end{align}
for $ j = 1, 2,\ldots,N$, where we suppose that $f_j$, $g_j \in C^{\infty}_{0}(\R^2)$, and $\eps$ is a small positive parameter.
We concentrate on this simple case where $F$ depends only on $v$ and $\pa u=(\pa v, \pa w)$, because the difference of the behaviors of $w$ and $\pa w$ makes it complicated to consider the case where $F$ depends also on $w$ itself \footnote{For example, the readers can see the difference in Lemma \ref{lem;hom} and \ref{lem;inhom} below; 
moreover the energy estimate does not give a natural control of the $L^{2}$-norm of $w$.}.
We assume that $F=(F_{j})_{1 \le j \le N}$ is smooth in its arguments and satisfies  
\begin{align}
	F(v,\pa u)=\op{O}(|v|^{3}+|\pa u|^{3})
	\label{eq;nbh}
\end{align}
near $(v, \pa u)=(0, 0)$.

Let us recall some previous researches briefly.
For systems of nonlinear Klein-Gordon equations (namely the case where $N_0=N$ and our system is $(\Box+m_j^2)v_j=F_j(v, \pa v)$), it is a known fact that the small data global existence holds for the Cauchy problem \eqref{eq;nkgw}-\eqref{eq;id} without any further assumption, as the power of the nonlinearity $F$ is super-critical (see Ozawa-Tsutaya-Tsutsumi~\cite{OTT} and  Simon-Taflin~\cite{ST}).

On the other hand, for systems of nonlinear wave equations (namely the case where $N_0=0$ and our system is $\Box w=F(\pa w)$), the situation is different from the above case in that the solution can blow up in finite time for some cubic nonlinearity $F$ even if the initial data are small enough.
Thus we need some additional condition to obtain the global existence.
For example, it is known that we have global existence results for our system, provided that the cubic part $F^{\cubic}$ of the nonlinearity $F$ satisfies the (cubic) null condition
\begin{align}
	F^{\cubic}\bigl((\omega_a Y)_{0 \le a \le 2} \bigr)= 0, \qquad \omega=(\omega_1, \omega_2) \in \mathbb{S}^{1}, \ Y\in \R^{N}
	\label{condi;null}
\end{align}
with $\omega_0=-1$, where the left-hand side means that $\omega_a Y$ is substituted in place of $\pa_a w$ and $\mathbb{S}^1$ denotes the unit circle (see Godin~\cite{God} and Hoshiga~\cite{H} for the above result; see also Katayama~\cite{Kata1,Kata2} for the case where $F=F(w, \pa w)$).
The null condition was originally introduced by Klainerman~\cite{Kla2} and Christodoulou~\cite{Christ} to ensure the small data global existence for three space dimensional wave equations with quadratic nonlinearities.

After Lindblad-Rodnianski~\cite{LinRod} introduced the weak null condition, which has not yet been proved to be sufficient for the small data global existence, some related sufficient conditions weaker than the null condition have been intensively researched for two and three space dimensional wave equations.
Among such weak null conditions, we would like to focus on the one introduced by Katayama-Matsumura-Sunagawa~\cite{KMatsuS} in two space dimensions, which we call the KMS condition:
\begin{description}
	\item[(\rm{KMS})]
	There is $\op{J}=\op{J}(\omega) \in C(\mathbb{S}^{1}; \op{S}^{N}_{+})$ such that 
		\begin{align}
			\label{condi;KMS}
			Y^{\T} \op{J}(\omega)F^{\cubic}\bigl((\omega_a Y)_{0 \le a \le 2}\bigr) \ge 0, \qquad \omega=(\omega_1, \omega_2) \in \mathbb{S}^{1}, \ Y\in \R^{N}
		\end{align}
	with $\omega_0=-1$, where $\op{S}^{N}_{+}$ is the set of real symmetric positive-definite matrices of size $N \times N$.
\end{description}
Here and hereafter, $\op{A}^{\T}$ denotes the transpose of a matrix $\op{A}$.
As usual, $\R^{N}$-vectors are identified with $N \times 1 $-matrices.
It is trivial to see that the KMS condition \eqref{condi;KMS} is weaker than the null condition \eqref{condi;null};
however the KMS condition ensures the small data global existence.

Finally, for systems of nonlinear wave and Klein-Gordon equations (namely the case where $1 \le N_0 < N$), we also need some restriction on the nonlinearity to obtain the small data global existence as these systems contain nonlinear wave equations.
Motivated by the previous works \cite{Geo1,Kata3,Kata4,LFM} for systems with quadratic nonlinearity in three space dimensions, Aiguchi~\cite{Aig} investigated the Cauchy problem for \eqref{eq;nkgw}-\eqref{eq;id}, and proved the small data global existence, assuming that the interaction between wave components in the wave equations satisfies the null condition (recently systems with quadratic nonlinearities in two space dimensions are also widely studied; see for example, Dong~\cite{Don1,Don2}, Duan-Ma~\cite{DM}, Dong-Wyatt~\cite{DW} and Ma~\cite{Ma1,Ma2,Ma3,Ma4,Ma5}).

A natural question now arises: 
\textit{Can we obtain the small data global existence for \eqref{eq;nkgw}-\eqref{eq;id} if we replace the null condition in} \cite{Aig} \textit{by the} KMS \textit{condition?}
As far as we know, no previous research has investigated 
this question, and this is non-trivial because the scaling operator, used in the original work \cite{KMatsuS} for the wave equations, is not compatible with Klein-Gordon equations, so we need some modification of the treatment.

\begin{ack}
The author would like to express the deepest appreciation to Prof. Soichiro Katayama for his valuable advice, supports and encouragements. 
The author also thanks Prof. Hideaki Sunagawa for his useful suggestions and comments.
\end{ack}


\section{Condition and Main Theorem} \label{sec;main}
Before we proceed to the details, we give some notation.
We denote the cubic part of $F(v, \pa u)=\left(F_j(v, \pa u)\right)_{1 \le j \le N}$ as $F^{\cubic}(v, \pa u)=\left(F_j^{\cubic}(v, \pa u)\right)_{1 \le j \le N}$.
If we write $\pa^{\alpha}=\pa_{0}^{\alpha_0}\pa_{1}^{\alpha_1}\pa_{2}^{\alpha_2}$ for a multi-index $\alpha=(\alpha_0, \alpha_1, \alpha_2)$, then $F^{\cubic}$ has the form
\begin{align}
	F_{j}^{\cubic}(v, \pa u) 
	=\sum_{1 \le k \le l \le m \le N}\sum_{|\alpha|, |\beta|, |\gamma|\le 1}
	C_{jklm}^{\alpha\beta\gamma}
	(\pa^\alpha u_k)(\pa^\beta u_l)(\pa^\gamma u_m)
	\label{eq;Fcj}
\end{align}
with real constants $C_{jklm}^{\alpha\beta\gamma}$, where $C_{jklm}^{\alpha\beta\gamma}=0$,  
if $N_0+1\le k\le N$ and $|\alpha|=0$, or if $N_0+1\le l\le N$ and $|\beta|=0$, or if $N_0+1\le m \le N$ and $|\gamma|=0$, 
since $F$ is independent of $w$ itself.
For $1 \le j \le N$, we can divide $F^{\cubic}_{j}(v, \pa u)$ into the following four parts:
\begin{align*}
	F^{\kl}_{j}(v, \pa v) &:= \sum_{1 \le k\le l \le m \le N_{0}} \sum_{|\alpha|, |\beta|, |\gamma| \le 1} C_{jklm}^{\alpha\beta\gamma}(\pa^{\alpha}v_{k})(\pa^{\beta}v_{l})(\pa^{\gamma}v_{m}),\\ 
	F^{\kkw}_{j}(v, \pa u) &:= \sum_{\substack{1\le k \le l \le N_{0} \\ N_{0}+1\le m \le N}} \sum_{\substack{|\alpha|, |\beta|\le 1 \\ |\gamma|=1}} C_{jklm}^{\alpha\beta\gamma} \pa^{\alpha}v_{k})(\pa^{\beta}v_{l})(\pa^{\gamma}w_{m}),\\ 
	F^{\kww}_{j}(v, \pa u) &:= \sum_{\substack{1\le k \le N_{0} \\ N_{0}+1\le l \le m \le N}} \sum_{\substack{|\alpha| \le 1 \\ |\beta|, |\gamma|=1}} C_{jklm}^{\alpha\beta\gamma}(\pa^{\alpha}v_{k})(\pa^{\beta}w_{l})(\pa^{\gamma}w_{m}),\\
	F^{\w}_{j}(\pa w) &:= \sum_{N_{0}+1 \le k \le l \le m \le N} \sum_{|\alpha|,|\beta|,|\gamma|=1} C_{jklm}^{\alpha\beta\gamma}(\pa^{\alpha}w_{k})(\pa^{\beta}w_{l})(\pa^{\gamma}w_{m}).
\end{align*}
Since a multi-index $\alpha$ with $|\alpha|=1$ can be identified with a number $a \in \{0, 1, 2\}$ satisfying $\pa^{\alpha}=\pa_a$, we also write $F^{\w}_{j}(\pa w)$ as 
\begin{align*}
	F^{\w}_{j}(\pa w)=\sum_{N_0+1 \le k \le l \le m \le N}\sum^{2}_{a, b, c=0}C^{abc}_{jklm}(\pa_aw_k)(\pa_bw_l)(\pa_cw_m).
\end{align*}
We define $F_{j}^{\h}(v, \pa u) = F_{j}(v, \pa u) - F_{j}^{\cubic}(v, \pa u)$, so that $F_j^{\h}(v, \pa u)=\op{O}(|v|^4+|\pa u|^4)$ near $(v, \pa u)=(0,0)$.

We write $F=(F_{\K}, F_{\W})$ with 
\begin{align*}
               F_{\K}:=(F_{j})_{1 \le j \le N_{0}}, \; F_{\W}:=(F_{j})_{N_{0}+1 \le j \le N}.
\end{align*}
Similarly, for $\ast=\cubic, \kl, \kkw, \kww, \w, \h$, we write 
\begin{align*}
	F^{\ast}:=(F_{j}^{\ast})_{1 \leq j \leq N}, 
	\; F^{\ast}_{\K}:=(F_{j}^{\ast})_{1 \leq j \leq N_{0}}, 
	\;F^{\ast}_{\W}:=(F_{j}^{\ast})_{N_{0}+1 \leq j \leq N}.
\end{align*}

Notice that $F_{\W}^{\w}$ is what we call the \textit{interaction between the wave components in the wave equations} in the introduction.
Throughout this article, we always take
\begin{align*}
	\omega_0:=-1 \text{ and } N_1:=N-N_0.
\end{align*}
We define the \textit{reduced nonlinearity} $F^{\red}_{\W}=\left(F_j^{\red}\right)_{N_0+1 \le j \le N}$ of $F_{\W}^{\w}$ by
\begin{align} 
	\notag
	F_j^{\red}(\omega, Y)&=F_j^{\w}\big( (\omega_a Y)_{0 \le a \le 2} \big)\\
	\label{eq;wred}
	&=\sum_{N_0+1 \le k \le l \le m \le N}\sum^2_{a,b,c=0}C^{abc}_{jklm}\omega_a\omega_b\omega_cY_kY_lY_m
\end{align}
for $\omega=(\omega_1, \omega_2) \in \mathbb{S}^1$ and $Y=(Y_j)_{N_0+1 \le j \le N} \in \R^{N_1}$.

In this notation, the null condition for $F^{\w}_{\W}$ in \cite{Aig} can be written as follows:
\begin{align}
	F^{\red}_{\W}(\omega, Y)=0, \quad \omega \in \mathbb{S}^{1}, \ Y \in \R^{N_1}.
	\label{condi;kgwnull}
\end{align}
Now we would like to introduce our condition:
\begin{description}
	\item[({\bf W})]
	The KMS condition for $F^{\w}_{\W}$: 	There is $\op{J}=\op{J}(\omega) \in C(\mathbb{S}^{1}; \op{S}^{N_1}_{+})$ such that 
	\begin{align*}
		Y^{\T} \op{J}(\omega)F^{\red}_{\W}(\omega, Y) \ge 0, \qquad \omega \in \mathbb{S}^{1}, \ Y \in \R^{N_1},
	\end{align*}
	where, as before, $\op{S}^{N_1}_{+}$ is the set of real symmetric positive-definite matrices of size $N_1 \times N_1$.
\end{description}
The following theorem is our main result.

\begin{thm} \label{thm;main}
	Assume that the condition {\rm (W)} is satisfied. 
	Given $f$, $g \in C^{\infty}_{0}(\R^{2};$ $\R^{N})$, we can take a positive constant $\eps_0$ such that there is a global smooth solution $u=(v, w)$ in $[0, \infty) \times \R^2$ to the Cauchy problem \eqref{eq;nkgw}-\eqref{eq;id} for any $\eps \in (0, \eps_0]$.
\end{thm}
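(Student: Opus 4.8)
The plan is to run a bootstrap (continuity) argument built on the admissible vector fields $Z\in\{\pa_0,\pa_1,\pa_2,\Omega,L_1,L_2\}$, where $\Omega=x_1\pa_2-x_2\pa_1$ is the rotation and $L_k=x_k\pa_t+t\pa_k$ the Lorentz boosts; all of these commute with $\Box+m_j^2$, in contrast to the scaling field $S=t\pa_t+x\cdot\nabla$, which commutes with $\Box$ but not with $\Box+m_j^2$ and so must be avoided --- this is the source of the difficulty flagged in the introduction. By finite propagation speed and \eqref{eq;id} the solution is, as long as it exists, supported in $\{|x|\le t+R\}$ for some $R>0$. After recalling local existence and the standard continuation criterion, it suffices to propagate a set of a priori bounds on $[0,T)$. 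Fixing a large integer $s$, I would take bootstrap assumptions of the form: (i) a high-order energy $\mathcal{E}_s(t):=\sum_{|I|\le s}\bigl(\no{\pa Z^Iv(t)}_{L^2}+\no{Z^Iv(t)}_{L^2}+\no{\pa Z^Iw(t)}_{L^2}\bigr)\le A\eps\jb{t}^{C_0\eps^2}$, together with the corresponding Alinhac-type ghost-weighted space-time bound on the good derivatives $T_a=\pa_a+\omega_a\pa_t$ and $|x|^{-1}\Omega$ of $Z^Iu$; (ii) the Klein-Gordon pointwise decay $|Z^Iv(t,x)|\le A\eps\jb{t}^{-1+C_0\eps^2}$ for $|I|\le s-2$; and (iii) the self-similar wave decay $|\pa w(t,x)|\le A\eps\jb{t+|x|}^{-1/2}$ for all $(t,x)$, crucially \emph{without} any growing factor, together with $|Z^Iw(t,x)|+\jb{t+|x|}^{1/2}|\pa Z^Iw(t,x)|\le A\eps\jb{t}^{-1/2+C_0\eps^2}$ for $|I|\le s-2$. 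The goal is to recover all of these with $A/2$ in place of $A$, for $\eps\le\eps_0$ small.

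For the energy estimates I would apply $Z^I$, $|I|\le s$, to \eqref{eq;nkgw}; since the $Z$'s commute with $\Box+m_j^2$ no commutator terms appear, and $Z^IF_j$ is, by the Leibniz rule, a finite sum of trilinear expressions $(\pa^\alpha Z^{I_1}u_k)(\pa^\beta Z^{I_2}u_l)(\pa^\gamma Z^{I_3}u_m)$ with $|I_1|+|I_2|+|I_3|\le|I|$, plus quartic-and-higher remainders. The usual energy identity, sharpened with a bounded ghost weight $e^{q(|x|-t)}$ ($q'\ge0$) and, for the wave block, with the matrix weight $\op{J}(\omega)$ supplied by condition {\rm(W)} in the manner of \cite{KMatsuS}, reduces everything to bounding $\sum_{|I|\le s}\int|\pa Z^Iu|\,|Z^IF|\,dx$. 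I expect three mechanisms to combine here: the many wave--Klein-Gordon and Klein-Gordon self-interaction terms ($F^{\kl}_{\W},F^{\kkw}_{\W},F^{\kww}_{\W}$ and $F^{\kl}_{\K},\dots$) decay strictly faster than $\jb{t}^{-1}$ in $L^2$ thanks to (ii)--(iii) and so contribute an absolutely convergent time integral; every trilinear factor that is a good derivative carries an extra $\jb{t-|x|}^{-1}$ or $\jb{t}^{-1}$ and is absorbed by the ghost-weighted space-time bound; and the genuinely cubic wave self-interaction $F^{\w}_{\W}$, after substituting $\pa_a w=-\omega_a\pa_t w+(\text{good})$, splits into a good-derivative part and a ``reduced'' part whose contribution to the $\op{J}$-weighted energy is, up to good remainders, $-\int(\pa_tZ^Iw)^{\T}\op{J}(\omega)\bigl(D_YF^{\red}_{\W}(\omega,-\pa_tw)\bigr)(\pa_tZ^Iw)\,e^q\,dx$, controlled in modulus by $\no{\pa w}_{L^\infty}^2$ times the energy. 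Feeding in the uniform bound from (iii), a Gronwall argument then yields (i), provided the hierarchy of exponents is chosen compatibly; the Klein-Gordon energies are handled the same way, the only slowly growing source being $F^{\w}_{\K}=\op{O}(|\pa w|^3)$.

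The heart of the argument is obtaining the uniform bound (iii). For the Klein-Gordon components I would invoke the two-dimensional $L^\infty$--$L^2$ decay estimate for $\Box+m^2$ adapted to $\{\pa,\Omega,L\}$ --- boosts alone suffice here, so the absence of $S$ costs nothing --- which gives $|Z^Iv|\lesssim\eps\jb{t}^{-1+C_0\eps^2}$ for $|I|\le s-2$ after checking that $F_{\K}$ is absolutely integrable in the relevant Duhamel estimate. For the wave components, a Klainerman--Sobolev inequality for $\{\pa,\Omega,L\}$ on the support $\{|x|\le t+R\}$ gives the weaker form of (iii) with the growing factor, and the identity $T_aw=\tfrac1t L_aw+\omega_a\tfrac{t-|x|}{t}\pa_tw$ together with the ghost-weighted bound upgrades the good derivatives of $w$ to a faster rate. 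It remains to remove the factor $\jb{t}^{C_0\eps^2}$ from the zeroth-order bound, and this is exactly where {\rm(W)} enters. Writing $q=|x|-t$, $s=\log\jb{t}$, $w_j=\jb{|x|}^{-1/2}U_j(s,q,\omega)$ and using $\Box=\jb{|x|}^{-1/2}(\pa_t-\pa_r)(\pa_t+\pa_r)\bigl(\jb{|x|}^{1/2}\,\cdot\,\bigr)+\op{O}(\jb{|x|}^{-2})$ together with $\pa_aw_j=-\omega_a\pa_tw_j+(\text{good})$, the wave part of \eqref{eq;nkgw} reduces, along outgoing rays, to
\begin{align*}
	2\,\pa_s V_j=-F_j^{\red}(\omega,V)+\mathcal{R}_j,\qquad V=V(s,q,\omega):=(\pa_qU_j)_{N_0+1\le j\le N},
\end{align*}
where the remainder $\mathcal{R}=(\mathcal{R}_j)$ collects the $\op{O}(\jb{|x|}^{-2})$ part of the d'Alembertian applied to $\jb{|x|}^{-1/2}U$, the good-derivative contributions to $F^{\w}_{\W}$, the faster-decaying terms $F^{\kl}_{\W},F^{\kkw}_{\W},F^{\kww}_{\W}$, and the quartic remainder $F^{\h}_{\W}$; using (i)--(iii) one checks $\int_{s_0}^\infty\sup_{q,\omega}|\mathcal{R}(s)|\,ds\lesssim\eps^2$. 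Condition {\rm(W)} furnishes $\op{J}=\op{J}(\omega)\in C(\mathbb{S}^1;\op{S}^{N_1}_{+})$ with $V^{\T}\op{J}(\omega)F^{\red}_{\W}(\omega,V)\ge0$, so along each ray
\begin{align*}
	\frac{d}{ds}\bigl(V^{\T}\op{J}(\omega)V\bigr)=-V^{\T}\op{J}(\omega)F^{\red}_{\W}(\omega,V)+2V^{\T}\op{J}(\omega)\mathcal{R}\le 2\,\no{\op{J}(\omega)}\,|V|\,|\mathcal{R}|,
\end{align*}
and since $\op{J}$ is uniformly positive-definite on the compact circle $\mathbb{S}^1$, Gronwall in $s$ keeps $|V(s,q,\omega)|\lesssim\eps$ for all $s\ge s_0$, the $\eps$ coming from the data. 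Translating back gives $|\pa w(t,x)|\lesssim\eps\jb{t+|x|}^{-1/2}$ with no growing factor, which is (iii). (Running the same profile analysis on $\pa Z^Iw$, $|I|\le s-2$, with $F^{\red}_{\W}$ replaced by its linearization and only the crude bound $\no{\pa w}_{L^\infty}^2\lesssim\eps^2\jb{t}^{-1}$ needed, reproves the weaker estimate in (iii).)

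Putting these together, each bootstrap quantity is recovered with the constant $A/2$ once $\eps_0$ is small, so $T=\infty$ and the solution is global and smooth. I expect the genuine obstacles to be, first, the systematic replacement of the scaling field: both the decay theory and the reduced equation must be based on the boosts $L_k$ and the good-derivative identities, and in particular one must verify that the remainder $\mathcal{R}$ is integrable in $s$ \emph{without} $S$; second, making the structural information in {\rm(W)} --- which only controls the single quadratic form $V^{\T}\op{J}F^{\red}_{\W}$ at the principal order --- propagate through the vector-field differentiation and through the coupling with the Klein-Gordon block, so that the various mixed cubic interactions $F^{\kl},F^{\kkw},F^{\kww},F^{\w}$ across both blocks all fit within their decay budgets; and third, the bookkeeping needed to choose the hierarchy of growth exponents so that the Gronwall arguments actually close. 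The Lyapunov computation above also shows that $V(s,q,\omega)$ converges as $s\to\infty$ along each ray, which is the mechanism behind the remark on the asymptotic behavior of global solutions.
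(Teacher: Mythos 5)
Your overall plan coincides with the paper's on the central point: a bootstrap built only on $\{\pa,\Omega,L_1,L_2\}$ (avoiding the scaling field), rough decay for both blocks, and then a profile ODE $\pa_+\op{W}=-\frac{1}{2t}F^{\red}_{\W}(\omega,\op{W})+H$ along outgoing rays, closed with the Lyapunov quantity $\op{W}^{\T}\op{J}(\omega)\op{W}$ supplied by condition (W). That is exactly Lemma~\ref{lem;rdw} (sharp zeroth-order bound for $\pa w$) and Lemma~\ref{lem;dGw} (linearized version for $\Gamma^\alpha w$) in the paper. Two points of your proposal differ, and one of them is a genuine gap.

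First, you propose to build the matrix weight $\op{J}(\omega)$ and an Alinhac-type ghost weight $e^{q(|x|-t)}$ directly into the energy identity. The paper does not: it uses the ordinary energy inequality, accepts the resulting $\jb{t}^{CM^2\eps^2}$ growth, and confines all use of (W) to the pointwise ODE along rays. Your variant is not only unnecessary but risky. Condition (W) only asserts $\op{J}\in C(\mathbb{S}^{1};\op{S}^{N_1}_{+})$; $\op{J}$ is not assumed differentiable. An energy identity with the variable-coefficient weight $\op{J}(x/|x|)$ generates terms with $\pa\op{J}$ that are not even defined under the hypothesis. You would first have to mollify $\op{J}$ and check the mollification preserves the non-negativity in (W) with uniformly comparable constants. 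The paper sidesteps this entirely and shows the plain energy estimate suffices.

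Second, and more seriously, you claim the Klein--Gordon decay $|Z^Iv|\le C\eps\jb{t}^{-1+C_0\eps^2}$ follows from the $L^\infty$--$L^2$ estimate for $\Box+m^2$ ``after checking that $F_{\K}$ is absolutely integrable in the relevant Duhamel estimate.'' It is not. The term $F^{\w}_{\K}=\op{O}(|\pa w|^3)$ is borderline: from the bootstrap bounds one only obtains $\|F^{\w}_{\K}(t)\|_s\le C\eps^3\jb{t}^{-1+\theta}$ with $\theta>0$, and feeding this into Lemma~\ref{lem;v} does not return a uniform constant, so your closing of the Klein--Gordon bootstrap is incomplete. The paper deals with this via the Kosecki--Tsutsumi normal form $\tl{v}_j=v_j-m_j^{-2}F^{\w}_j(\pa w)$ (Section~\ref{section;transform}): applying $\Box+m_j^2$ to $\tl{v}_j$ trades the slowly decaying cubic $F^{\w}_{j}$ for expressions involving $\pa_a\bigl(F_k(v,\pa u)\bigr)$ and null forms $Q_0(\pa_bw_l,\pa_cw_m)$, which gain a full factor $\jb{t+r}^{-1}$ (Lemma~\ref{lem;null forms} and Lemma~\ref{lem;vtilder}). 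Without this normal form, or an equivalent device, the Klein--Gordon part of your argument does not close.
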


As was mentioned in the introduction, the null condition implies the KMS condition;
in other words, the condition \eqref{condi;kgwnull} implies the condition (W). 
Hence our theorem covers the previous result in \cite{Aig}.
The example below shows that our theorem is a real extension.

\begin{ex} \label{ex}
	For $u=(v, w)$ with $N=2$ and $N_0=N_1=1$, we consider a system
	\begin{align}
		\begin{cases}
			(\Box+m^2)v=F_1(v, \pa u), \\
			\Box w=F_2(v, \pa u), 
		\end{cases}
	\label{eq;ex}
	\end{align}
	where $m>0$ and 
	\begin{align*}
		F_2^{\w}(\pa w)=-c(\pa_a w)^2(\pa_t w)+(\pa_t w)\{(\pa_t w)^2-(\pa_1 w)^2-(\pa_2 w)^2\}
	\end{align*}
	for some $c \ge 0$ and $a=0,1,2$.
	We do not assume any additional assumptions on $F_1$, $F_2^{\ast}$ for $\ast=\kl, \kkw, \kww, \h$.
	As $F_{\W}^{\w}(\pa w)=F_2^{\w}(\pa w)$ and $Y=Y_2$, we have 
	\begin{align*}
		F_{\W}^{\red}(\omega, Y)=c\omega_a^2Y^3.
	\end{align*}
	The null condition \eqref{condi;null} for $F_2^{\w}$ is violated unless $c=0$;
	however the condition (W) is satisfied as we have
	\begin{align*}
		Y^{\T}F_{\W}^{\red}(\omega, Y)=c\omega_a^2Y^4 \ge 0, \quad \omega \in \mathbb{S}^{1}, \ Y(=Y_2) \in \R
	\end{align*}
	for any $c \ge 0$.
\end{ex}

This paper is organized as follows:
In Section \ref{section;preliminaries}, we give some preliminaries.
Section \ref{section;profile} is devoted to the profile system related to the KMS condition, and we introduce a technical transformation for the Klein-Gordon components in Section \ref{section;transform}.
Theorem \ref{thm;main} will be proved in Section \ref{section;sdge}.
In Section \ref{section;AB}, we briefly discuss the asymptotic behavior of global solutions.


\section{Preliminaries} \label{section;preliminaries}
In this section, we describe known decay estimates for the solutions to the linear wave and Klein-Gordon equations.
Throughout this article, we designate positive constants by $C$, whose value may change line by line.
We write $\jb{y}=\sqrt{1+|y|^2}$ for $ y \in \R^{d}$ with some natural number $d$. 
We denote by $\Z_+$ the set of non-negative integer.

To begin with, we introduce vector fields:
\begin{align*}
	L_{1}:=x_{1}\pa_{t}+t\pa_{1}, \quad L_{2}:=x_{2}\pa_{t}+t\pa_{2}, \quad \Omega:=x_{1}\pa_{2}-x_{2}\pa_{1},
\end{align*}
which are introduced by Klainerman~\cite{Kla1, Kla2}.
We define 
\begin{align*}
	\Gamma=(\Gamma_1, \Gamma_2,...,\Gamma_6) := (L_{1}, L_{2}, \Omega, \pa_0, \pa_1, \pa_2).
\end{align*}
Using a multi-index $\alpha=(\alpha_1,\ldots,\alpha_6) \in \Z^6_+$, we write $\Gamma^\alpha=\Gamma^{\alpha_1}\Gamma^{\alpha_2}\cdot\cdot\cdot\Gamma^{\alpha_6}$.
For a smooth function $\phi=\phi(t,x)$ and a non-negative integer $s$, we put
\begin{align*}
	|\phi(t,x)|_{s}=\sum_{|\alpha| \leq s}|\Gamma^{\alpha}\phi(t,x)|, \quad \|\phi(t)\|_{s}=\bigl\| |\phi(t,\cdot)|_{s}| \bigr\|_{L^{2}(\R^{2})}.
\end{align*}

We introduce the commutator $[A,B]=AB-BA$ between operators $A$ and $B$.
By simple computation, for $j=1,2$, $a=0,1,2$ and $m \ge 0$, we get 
\begin{align*}
[\Box+m^2, L_j]=[\Box+m^2, \Omega]=[\Box+m^{2},\pa_{a}]=0.
\end{align*}
Therefore for any multi-index $\alpha$ and a smooth function $\phi$, we get 
\begin{align}
	(\Box+m^{2})(\Gamma^{\alpha}\phi)=\Gamma^{\alpha} \bigl( (\Box+m^{2})\phi \bigr).
	\label{eq;comm}
\end{align}
We can also check that $[\Gamma_j, \Gamma_k]$ can be written as a linear combination of vector fields in $\Gamma$; 
in other words, if we write symbolically, we have $[\Gamma, \Gamma]=\Gamma$, which implies $|\Gamma^\alpha \phi|_s \le C|\phi|_{s+|\alpha|}$.
Especially, we have $[\Gamma, \pa]=\pa$.
Hence for any non-negative integer $s$, there is a positive constant $C_s$ such that we have
\begin{align}
	\frac{1}{C_{s}}|\pa \phi(t,x)|_{s} \leq \sum_{|\alpha| \leq s} \big| \pa(\Gamma^{\alpha}\phi)(t,x) \big| \leq C_{s}|\pa \phi(t,x)|_{s}
	\label{eq;Cs}
\end{align}
for any smooth function $\phi$.

We firstly describe a known decay result for solutions to linear Klein-Gordon equations
\begin{align}
	(\Box+m^{2})v(t,x)=\Phi(t,x), \qquad (t,x)\in (0, \infty) \times \R^{2}
	\label{eq;KG}
\end{align}
with $m > 0$.
We use the decay estimate in Georgiev~\cite{Geo2};
however, since we are working in the situation of compactly supported data, the statement can be simplified as follows.
\begin{lem} \label{lem;v}
	Suppose that $0 < T \le \infty$ and $R>0$.
	Let $v$ be a smooth solution to \eqref{eq;KG}.
	We assume that $v(0, \cdot)$ has compact support, and that $\Phi(t,x)=0$ for all $(t,x) \in [0, T) \times \R^2$ satisfying $|x| \ge t+R$.
	For $\rho \ne 1$, there exists a positive constant $C=C(m, \rho, R)$, independent of $T$, such that
	\begin{align}
		\jb{t+|x|}|v(t,x)| \le C \|v(0)\|_5 +Ct^{(1-\rho)_+} \sup_{\tau \in [0,t]} \jb{\tau}^{\rho} \|\Phi(\tau)\|_4
		\label{eq;v}
	\end{align}
	for $(t,x) \in (0, T) \times \R^2$, where $a_+=\max\{a, 0\}$ for $a \in \R$.
\end{lem}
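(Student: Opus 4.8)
The plan is to invoke the decay estimate of Georgiev~\cite{Geo2} for the inhomogeneous Klein--Gordon equation in $\R^{1+2}$ and to specialize it to the compactly supported situation at hand; the three ingredients are finite propagation speed, the Klainerman--Sobolev inequality on $\R^2$, and an elementary one-dimensional integral. Georgiev's estimate bounds $\jb{t+|x|}\,|v(t,x)|$ by a sum of weighted $L^{\infty}$-type norms of the Cauchy data --- involving finitely many of the fields $\Gamma$ and weights in $\jb{t\pm|x|}$ --- plus a time-integral of a similar weighted norm of the source $\Phi$. What we have to verify is that, under the compact-support hypothesis, the data norm is dominated by $\|v(0)\|_{5}$, the source norm by $\|\Phi(s)\|_{4}$, and that carrying out the time integral produces the factor $t^{(1-\rho)_{+}}$.

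\emph{Step 1: finite propagation speed.} The operator $\Box+m^{2}$ propagates at speed one. Since the Cauchy data of $v$ is compactly supported, say in $\{|x|\le R_{0}\}$, while $\Phi(s,\cdot)$ is supported in $\{|x|\le s+R\}$ by hypothesis, Duhamel's formula and finite speed of propagation show that $v(t,\cdot)$ is supported in $\{|x|\le t+R'\}$ with $R':=\max\{R_{0},R\}$. For $|x|\ge t+R'$ both sides of \eqref{eq;v} vanish, so we may assume $|x|<t+R'$; there $\jb{t+|x|}$ is comparable to $\jb{t}$ with a constant depending only on $R'$, so it is equivalent to prove $|v(t,x)|\le C\jb{t}^{-1}(\cdots)$. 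In particular the single-weight form \eqref{eq;v} loses nothing against the sharper $\jb{t+|x|}^{1/2}\jb{t-|x|}^{1/2}$-weighted decay that Klein--Gordon enjoys in two dimensions.

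\emph{Step 2: conversion to $L^{2}$-norms and the time integral.} By linearity treat separately the contribution of the data (the solution $v^{\mathrm{h}}$ of the homogeneous equation with the same Cauchy data) and of the source (the solution $v^{\mathrm{i}}$ with zero data), both supported in $\{|x|\le t+R'\}$. For $v^{\mathrm{h}}$, Georgiev's homogeneous estimate together with the compact support of the data and the Klainerman--Sobolev inequality on $\R^{2}$ gives $\jb{t+|x|}\,|v^{\mathrm{h}}(t,x)|\le C\|v(0)\|_{5}$, the index $5$ being the total number of derivatives (Georgiev's plus the Sobolev embedding's) required, and $\|v(0)\|_{5}$ being understood to encode $\pa_{t}v(0,\cdot)$ and $\Phi(0,\cdot)$ through the equation, which is automatic in our application where the data lies in $C_{0}^{\infty}$. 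For $v^{\mathrm{i}}$, Georgiev's source-term estimate --- which it is cleaner to use directly than to recombine Duhamel with the homogeneous bound --- together with the support property of $\Phi(s,\cdot)$ and the Klainerman--Sobolev inequality $\sup_{y\in\R^{2}}\jb{s+|y|}^{1/2}\jb{s-|y|}^{1/2}|\Phi(s,y)|_{2}\le C\|\Phi(s)\|_{4}$, yields
\[
\jb{t+|x|}\,|v^{\mathrm{i}}(t,x)|\le C\int_{0}^{t}\|\Phi(s)\|_{4}\,ds
\le C\Big(\sup_{0\le s\le t}\jb{s}^{\rho}\|\Phi(s)\|_{4}\Big)\int_{0}^{t}\jb{s}^{-\rho}\,ds.
\]
Since $\int_{0}^{t}\jb{s}^{-\rho}\,ds\le C(\rho)\,t^{(1-\rho)_{+}}$ for $t\ge1$ whenever $\rho\ne1$ --- the excluded case $\rho=1$ producing a logarithm --- and the range $0<t<1$ reduces by continuity and a crude local bound to controlling $v$ and $\pa_{t}v$ at $t=0$ by $\|v(0)\|_{5}$, adding the two contributions gives \eqref{eq;v}.

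\emph{Main obstacle.} The only substantial input is Georgiev's estimate itself: that solutions of the two-dimensional Klein--Gordon equation decay like $\jb{t}^{-1}$ \emph{uniformly} in $x$, including near the light cone $|x|\approx t$ where the corresponding wave estimate degenerates, and that the source contribution is controlled by a plain time-integral of a weighted norm of $\Phi$ with no logarithmic loss. Granting this black box, the remaining work is bookkeeping --- tracking the numbers of vector fields through the Klainerman--Sobolev conversions so as to land on exactly $\|v(0)\|_{5}$ and $\|\Phi(s)\|_{4}$, and the elementary integral estimate --- together with the minor endpoint considerations noted above.
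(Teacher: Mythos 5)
Your overall strategy matches the paper's: both treat Georgiev's decay estimate for the two--dimensional Klein--Gordon equation as a black box and specialize it to the compactly supported setting, where $\jb{\tau+|\cdot|}$ on the support of $\Phi(\tau,\cdot)$ reduces to $\jb{\tau}$. However, the precise form you ascribe to Georgiev's source--term estimate is not the one the paper uses. You posit an $L^{\infty}$--type bound followed by a Klainerman--Sobolev conversion and then a genuine time integral
\begin{equation*}
\jb{t+|x|}\,|v^{\mathrm{i}}(t,x)|\le C\int_{0}^{t}\|\Phi(s)\|_{4}\,ds,
\end{equation*}
and you extract the factor $t^{(1-\rho)_{+}}$ from $\int_{0}^{t}\jb{s}^{-\rho}\,ds$. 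The estimate actually cited in the paper is already in terms of $L^{2}$--norms of $\Gamma^{\alpha}$--derivatives, so the Klainerman--Sobolev step is superfluous, and, more importantly, the source contribution enters not as a time integral but as a dyadic supremum
\begin{equation*}
\sum_{j\ge 0}\ \sum_{|\alpha|\le 4}\ \sup_{\tau\in[0,t]}\chi_{j}(\tau)\,\bigl\|\jb{\tau+|\cdot|}\Gamma^{\alpha}\Phi(\tau,\cdot)\bigr\|_{L^{2}},
\end{equation*}
with $\{\chi_{j}\}$ a dyadic partition of unity; the factor $\jb{t}^{(1-\rho)_{+}}$ then comes from summing the geometric series $\sum_{2^{j}\lesssim t}2^{j(1-\rho)}$. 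These two computations happen to give the same exponent for $\rho\ne1$, but the clean time--integral form you invoke is not obviously available: it does not follow from Duhamel's formula together with the $\jb{\tau}^{-1}$ homogeneous decay (that route has an extra $\jb{t-s}^{-1}$ in the integrand and costs a logarithm), and it is precisely the dyadic structure of Georgiev's estimate that avoids such a loss. So before relying on the integral form you should verify it against the reference; otherwise your argument as written hides a potential logarithmic gap that the paper's dyadic computation does not have.
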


\begin{proof}
	Let $\{ \chi_j \}_{j \in \Z_{+}} \bigl( \subset C^\infty_0(\R) \bigr)$ be a partition of unity in $[0, \infty)$ satisfying $\supp \chi_j \subset [2^{j-1}, 2^{j+1}]$ for $j \ge 1$ and $\supp \chi_0 \cap [0,\infty) \subset [0,2)$.
	Then it is proved in \cite{Geo2} that 
	\begin{align*}
		\jb{t+|x|}|v(t,x)| &\le C\sum^{\infty}_{j=0} \sum_{|\alpha| \leq 5} \left\| \jb{\cdot} \chi_{j}(|\cdot|) \Gamma^{\alpha} v(0,\cdot) \right\|_{L^{2}}\\
		&\quad +C\sum^{\infty}_{j=0} \sum_{|\alpha|\leq 4} \sup_{\tau \in [0,t]} 
		\chi_{j}(\tau) \big\| \jb{\tau+|\cdot|} \Gamma^{\alpha} \Phi(\tau,\cdot) \big\|_{L^{2}} \\
	\end{align*}
	without any support condition on $v(0)$ and $\Phi$.
	It is easy to see that the first term on the right-hand side is bounded by $C\|v(0)\|_5$ if $v(0)$ is compactly supported.
	Let $\|\Phi(t)\|_4 \le M_0 \jb{t}^{-\rho}$.
	Then, in view of the support condition for $\Phi$, we see that the second term is bounded by
	\begin{align*}
		CM_0\sum^{\infty}_{j=0} \sup_{\tau \in [0,t]} \chi_j(\tau) \jb{\tau}^{1-\rho} \le CM_0 \jb{t}^{(1-\rho)_+}
	\end{align*}
	because we have $\chi_j(\tau)\jb{\tau}^{1-\rho} \le C2^{j(1-\rho)}$.
\end{proof}

We switch to known decay results for solutions to linear wave equations
\begin{align}
	\begin{split}
		&\Box w(t,x)=\Psi(t,x), \qquad \qquad \qquad \qquad \qquad (t,x)\in (0, \infty) \times \R^{2}, \\
		&w(0,x)=w_{0}(x), \ (\pa_t w)(0,x)=w_{1}(x), \qquad x \in \R^2.
	\end{split}
	\label{eq;LW}
\end{align}
We use the weighted $L^{\infty}$-$L^{\infty}$ estimates in Hoshiga-Kubo~\cite{HK} and Kubo~\cite{Kubo}.
To state the estimates, we introduce
\begin{align} \label{eq;W-}
	W_{-}(t,r)&:=\min\{\jb{r},\jb{t-r} \}.
\end{align}
The first pair consists of the decay estimates of solutions and its derivatives to the homogeneous wave equation (namely the case where $\Psi=0$ in $(0, \infty) \times \R^{2}$).
\begin{lem} \label{lem;hom}
	Let $w$ be a smooth solution to \eqref{eq;LW} with $\Psi=0$.
	Suppose that $\mu>0$.
	For any $\bigl( w_{0}, w_{1} \bigr) \in C^{\infty}(\R^2) \times C^{\infty}(\R^2)$, it holds that 
	\begin{align}
		\begin{array}{l}
			\jb{t+|x|}^{1/2}\jb{t-|x|}^{1/2}  |w(t,x)| \le C\op{A}_{2+\mu, 0}[w_{0}, w_{1}], \\
			\jb{t+|x|}^{1/2}\jb{t-|x|}^{3/2}  |\pa w(t,x)| \le C\op{A}_{3+\mu, 1}[w_{0}, w_{1}]
		\end{array}
	\end{align}
	for $(t,x) \in [0,T) \times \R^2$, where we put
	\begin{align*}
		\op{A}_{\rho, s}[w_{0}, w_{1}]:= \sum_{|\alpha|+|\beta| \le s} \left( \sum_{|\gamma| \le 1} \| \jb{\cdot}^{\rho}\pa_{x}^{\alpha+\gamma}\Omega^{\beta}w_{0} \|_{L^{\infty}}+\|\jb{\cdot}^{\rho}\pa_{x}^{\alpha}\Omega^{\beta}w_{1}\|_{L^{\infty}} \right).
	\end{align*}
\end{lem}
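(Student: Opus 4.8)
The plan is to derive everything from the explicit Poisson representation of solutions of the two-dimensional homogeneous wave equation, obtained from Kirchhoff's formula by Hadamard's method of descent: after the rescaling $y=t\eta$, so that the time derivative no longer falls on the domain of integration,
\[
w(t,x)=\pa_t\Bigl(\frac{t}{2\pi}\int_{|\eta|<1}\frac{w_0(x+t\eta)}{\sqrt{1-|\eta|^2}}\,d\eta\Bigr)+\frac{t}{2\pi}\int_{|\eta|<1}\frac{w_1(x+t\eta)}{\sqrt{1-|\eta|^2}}\,d\eta .
\]
Carrying out $\pa_t$ (and, for the second inequality, one further $\pa$) and converting the resulting $\eta$-gradients back by integration by parts, one reduces both inequalities to pointwise bounds for a finite collection of integral operators $I[h](t,x)=t\int_{|\eta|<1}P(x+t\eta)\,\dfrac{h(x+t\eta)}{\sqrt{1-|\eta|^2}}\,d\eta$, where $P$ is a polynomial of degree $\le1$ and $h$ runs over $\{w_0,\pa w_0,\pa^2 w_0,w_1,\pa w_1\}$; the data derivatives occurring in the two cases are exactly those weighted in $\op{A}_{2+\mu,0}$ and $\op{A}_{3+\mu,1}$, respectively.

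To estimate $I[h]$, write $r=|x|$, undo the rescaling, and pass to polar coordinates $y=\tau\sigma$ with $0\le\tau<t$ and $\sigma\in\mathbb{S}^1$, so that $I[h](t,x)=\int_0^t\int_{\mathbb{S}^1}\frac{\tau\,P(x+\tau\sigma)h(x+\tau\sigma)}{\sqrt{t^2-\tau^2}}\,d\sigma\,d\tau$. Inserting the pointwise weighted bound $|P(z)h(z)|\le\jb{z}^{-\kappa}\cdot(\text{a norm of the data})$ with $\kappa\in\{2+\mu,3+\mu\}$, the angular factor $\int_{\mathbb{S}^1}\jb{x+\tau\sigma}^{-\kappa}\,d\sigma$ is controlled by expanding $|x+\tau\sigma|^2=(r-\tau)^2+r\tau\,\phi^2+O(\phi^4)$ near the direction of $\sigma$ opposite to $x$; for $\kappa>2$ this gives a bound of order $\jb{r-\tau}^{-(\kappa-1)}/(1+\sqrt{r\tau})$ when the square-root term dominates, and of order $\jb{r-\tau}^{-\kappa}$ otherwise. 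Feeding this into the radial integral against $\tau(t^2-\tau^2)^{-1/2}\,d\tau$ and distinguishing the ranges $\tau\approx r$ and $\tau\approx t$, one extracts the claimed factor $\jb{t+r}^{-1/2}\jb{t-r}^{-1/2}$ in the first estimate (the surplus coming from $\mu>0$ being absorbed into $\jb{t-r}^{-1/2}$) and $\jb{t+r}^{-1/2}\jb{t-r}^{-3/2}$ in the second, where the data carries one extra unit of weight. At points where the kernel produced after differentiation is too singular for this to run directly, one first integrates by parts in the angular variable of $y$; this trades a radial power for one tangential derivative $\Omega$ of the data, which is the origin of the terms with $|\beta|\le1$ in $\op{A}_{3+\mu,1}$. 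It is also convenient to split the $y$-integral according to whether $(t,x)$ is timelike or spacelike related to the origin and whether $r$ is bounded or large, treating the bounded region separately.

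I expect the main obstacle to be the sharp interplay of $\jb{t+r}^{1/2}$ and $\jb{t-r}^{1/2}$ near the light cone $t=r$: there the boundary singularity $(t^2-\tau^2)^{-1/2}$ and the concentration of the angular integral at $\tau\approx r$ reinforce one another, and one must exploit the quadratic vanishing of $|x+\tau\sigma|^2-(r-\tau)^2$ in the angular variable to keep control; this is the point at which the hypothesis $\mu>0$ is genuinely used, making the tail of the angular integral converge with a usable rate. The rest — commuting the fields $\Gamma$ via \eqref{eq;Cs}, checking that the angular integration by parts leaves no boundary term, and collecting the weighted $L^\infty$ norms of the data into the quantities $\op{A}_{2+\mu,0}$ and $\op{A}_{3+\mu,1}$ — is routine bookkeeping. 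The argument is essentially that of Hoshiga--Kubo~\cite{HK} and Kubo~\cite{Kubo}, here specialised to smooth (rather than finitely differentiable) Cauchy data.
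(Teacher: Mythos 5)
The paper does not prove this lemma: it is quoted as a known result from Hoshiga--Kubo~\cite{HK} and Kubo~\cite{Kubo}, so there is no in-paper argument against which to compare. Your sketch recapitulates the route of those references---Poisson's representation, reduction to weighted integrals of the data over the backward light cone, the quadratic expansion $|x+\tau\sigma|^{2}=(r-\tau)^{2}+r\tau\phi^{2}+O(\phi^{4})$ driving the angular estimate $\min\bigl\{\jb{r-\tau}^{1-\kappa}(r\tau)^{-1/2},\,\jb{r-\tau}^{-\kappa}\bigr\}$, and the radial integration against $\tau(t^{2}-\tau^{2})^{-1/2}$, with the competing contributions of $\tau\approx r$ and $\tau\approx t$ producing $\jb{t+r}^{-1/2}\jb{t-r}^{-1/2}$ and the surplus $\jb{t-r}^{-\mu}$. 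The role of $\mu>0$ and the angular integration by parts behind the $\Omega$-terms in $\op{A}_{3+\mu,1}$ are both correctly identified, and the outline is sound.

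One slip to fix before this becomes a proof: the extra multiplicative factor appearing after differentiating the Poisson integral is a bounded polynomial in $\eta=(y-x)/t$ with $|\eta|<1$, not a polynomial in $y=x+t\eta$ as you wrote; a degree-one polynomial in $y$ would be unbounded on the backward light cone and could not be absorbed into the weight $\jb{z}^{-\kappa}$. Since the true factor is uniformly bounded on the domain of integration it is harmless, but as written the statement misrepresents how the weights $\jb{\cdot}^{2+\mu}$ and $\jb{\cdot}^{3+\mu}$ in $\op{A}_{2+\mu,0}$ and $\op{A}_{3+\mu,1}$ are actually used. Beyond that, the proposal leaves the dyadic radial case analysis, the boundary-term check in the angular integration by parts, and the near-axis region $\jb{r}\lesssim 1$ to routine verification; these are indeed routine, and no step would fail.
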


The second pair is for the inhomogeneous wave equation (namely the case where $w_{0}(x)=w_{1}(x)=0$ for all $x \in \R^2$).
\begin{lem} \it \label{lem;inhom}
	Let $w$ be a smooth solution to \eqref{eq;LW} with initial data $w_{0}=w_{1}=0$.
	Let $s$ be a non-negative integer.
	Suppose that $\xi \ge 0$, $0<\zeta<1/2$ and $\eta>0$.
	Then there exists a positive constant $C=C(\xi, \zeta, \eta)$ such that 
	\begin{align}
		\label{eq;west}
		&\jb{t+|x|}^{1/2-\xi} \jb{t-|x|}^{\zeta} |w(t,x)| \le C\op{B}_{\zeta-\xi, \eta,0}[\Psi](t,x), \\
		\label{eq;pawest}
		&\jb{t+|x|}^{-\xi} \jb{x}^{1/2}	\jb{t-|x|}^{1+\zeta}	|\pa w(t,x)| \le C\op{B}_{\zeta+\eta-\xi,0,1}[\Psi](t,x)
	\end{align}
	for $(t,x) \in [0,T) \times \R^2$, where
	\begin{align*}
		\op{B}_{\kappa, \mu, s}[\Phi](t,x)&=\sup_{0 \le \tau <t} \sup_{|y-x| \le t-\tau} \jb{y}^{1/2}\jb{\tau+|y|}^{1+\kappa} W_{-}(\tau, |y|)^{1+\mu} \\
		&\qquad \qquad \qquad \qquad \qquad \qquad \times \sum_{|\alpha|+|\beta| \le s}|\pa^{\alpha}\Omega^{\beta}\Phi(\tau, y)|.
	\end{align*}
\end{lem}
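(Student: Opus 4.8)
\textbf{Proof proposal for Theorem \ref{thm;main}.}

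The plan is to run a continuity (bootstrap) argument for the local smooth solution $u=(v,w)$, whose existence and uniqueness up to a maximal time $T_*$ follows from standard theory once $\eps$ is small; the goal is to show $T_*=\infty$. I would fix a large integer $s$ (say $s=10$ or so, enough to feed Lemmas \ref{lem;v}--\ref{lem;inhom}) and, guided by the fact that the KMS condition replaces pointwise cancellation by a ``ghost weight''/energy-type positivity, set up weighted norms adapted to the wave and Klein--Gordon pieces separately. For the Klein--Gordon component $v$ I would track $\|v(t)\|_{s}$ together with the pointwise weight $\jb{t+|x|}|v(t,x)|_{s-5}$ supplied by Lemma \ref{lem;v}; for the wave component $w$ I would track an energy-type quantity and the pointwise weights $\jb{t+|x|}^{1/2}\jb{t-|x|}^{1/2}|w(t,x)|_{s'}$ and $\jb{t+|x|}^{1/2}\jb{t-|x|}^{3/2}|\pa w(t,x)|_{s'}$ from Lemmas \ref{lem;hom}--\ref{lem;inhom}. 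The bootstrap hypothesis would be that all of these are bounded by $C_1\eps$ (with possibly a small power $t^{\delta}$ or $\log$ loss allowed for the top-order wave energy, as is typical in $2$D), and I would aim to recover them with constant $C_1/2$.

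The mechanism has four ingredients, carried out in this order. First, commute $\Gamma^{\alpha}$ through \eqref{eq;nkgw} using \eqref{eq;comm}, so that $\Gamma^{\alpha}u_j$ solves $(\Box+m_j^2)\Gamma^{\alpha}u_j=\Gamma^{\alpha}F_j$, and expand $\Gamma^{\alpha}F_j$ by Leibniz. Second, estimate the nonlinear source terms: using \eqref{eq;nbh}, \eqref{eq;Fcj} and the split into $F^{\kl},F^{\kkw},F^{\kww},F^{\w},F^{\h}$, every term contains at least one factor that is either a Klein--Gordon amplitude $v$ (which decays like $\jb{t+|x|}^{-1}$) or a wave derivative $\pa w$ (which decays like $\jb{t+|x|}^{-1/2}\jb{t-|x|}^{-3/2}$); distributing the derivative count so that at most half of the $\Gamma$'s land on any one factor and using the bootstrap pointwise bounds on the other factors, one checks that $\|\Gamma^{\alpha}F_j(t)\|_{L^2}$ is integrable-in-time (or $\jb{t}^{-1+\delta}$) with size $\lesssim (C_1\eps)^{2}\jb{t}^{-1}\cdot(\text{small})$, and similarly for the pointwise norms needed to feed $\op{B}[\Psi]$ in Lemma \ref{lem;inhom}. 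Third, close the wave energy: here is the only place the hypothesis (W) is genuinely used. One multiplies $\Box \Gamma^{\alpha}w_j$ not by $\pa_t\Gamma^{\alpha}w_j$ but by $(\op{J}(\omega)\pa_t\Gamma^{\alpha}w)_j$ with $\omega=x/|x|$ — more precisely one replaces $\omega$ by a smoothed version and exploits that, in the wave zone, $\pa_a\Gamma^{\alpha}w_j\approx -\omega_a\,\Gamma^{\alpha}\pa_t w_j$ up to faster-decaying ``good derivative'' errors $|\pa\Gamma^{\alpha}w_j| \lesssim \jb{t-|x|}^{-1}|\Gamma^{\alpha+1}w_j|$. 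Then the worst cubic term $F^{\w}_{\W}$ produces, after this weighting, exactly $Y^{\T}\op{J}(\omega)F^{\red}_{\W}(\omega,Y)\ge 0$ with $Y=\pa_t\Gamma^{\alpha}w$, so it has the \emph{good sign} in the energy identity and can be discarded; the remaining contributions ($F^{\kl},F^{\kkw},F^{\kww}$, the commutator errors, and the $F^{\h}$ quartic terms) are lower order by step two. Since $\op{J}(\omega)\in\op{S}^{N_1}_+$ uniformly on $\mathbb{S}^1$, the modified energy is comparable to the usual one. Fourth, feed the resulting source bounds back into Lemma \ref{lem;v} for $v$ (choosing $\rho$ slightly bigger than $1$ so that $t^{(1-\rho)_+}=1$) and into Lemmas \ref{lem;hom}--\ref{lem;inhom} for $w$, recovering all pointwise norms with the improved constant; combined with the energy step this closes the bootstrap, forcing $T_*=\infty$.

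The main obstacle — and the conceptual novelty flagged in the introduction — is reconciling the KMS ghost-weight method, originally built on the \emph{scaling} vector field $S=t\pa_t+x\cdot\pa_x$, with the Klein--Gordon components, for which $S$ does not commute with $\Box+m_j^2$. My plan sidesteps $S$ entirely by using only the Lorentz boosts, rotation and translations in $\Gamma$, and by importing the Klein--Gordon decay from Lemma \ref{lem;v} (which itself needs no scaling). The delicate point is then whether, \emph{without} $S$, the error terms coming from approximating $\pa_a$ by $-\omega_a\pa_t$ and from the $v$-dependence inside $F^{\kkw},F^{\kww}$ are still integrable in time in two space dimensions: the $2$D weights are borderline, so one must be careful to always spend a Klein--Gordon amplitude's full $\jb{t+|x|}^{-1}$ decay (never merely $\jb{t+|x|}^{-1/2}$) and to exploit the extra $\jb{t-|x|}^{-1}$ from each ``good derivative'' of $w$. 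If the accounting is done with a small fixed $\delta>0$ loss and the transformation of the Klein--Gordon unknowns introduced in Section \ref{section;transform} is used to absorb certain resonant quadratic-in-$v$ interactions, the estimates close; verifying this balance is the technical heart of the argument.
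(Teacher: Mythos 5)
Your proposal does not address the statement at hand. The statement to be proved is Lemma \ref{lem;inhom}, a \emph{linear} weighted $L^{\infty}$--$L^{\infty}$ decay estimate for a smooth solution of the inhomogeneous wave equation $\Box w=\Psi$ in two space dimensions with zero initial data: one must bound $\jb{t+|x|}^{1/2-\xi}\jb{t-|x|}^{\zeta}|w|$ and $\jb{t+|x|}^{-\xi}\jb{x}^{1/2}\jb{t-|x|}^{1+\zeta}|\pa w|$ by the weighted sup-norm $\op{B}_{\kappa,\mu,s}[\Psi]$ of the source over the backward light cone. What you have written instead is a bootstrap scheme for the nonlinear global existence result, Theorem \ref{thm;main}; nowhere do you estimate the Duhamel integral for the 2D wave propagator, and no part of your argument produces the specific weights $\jb{y}^{1/2}\jb{\tau+|y|}^{1+\kappa}W_-(\tau,|y|)^{1+\mu}$ or explains the roles of the parameters $\xi$, $\zeta$, $\eta$. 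In fact your scheme \emph{uses} Lemma \ref{lem;inhom} as an input (to convert source bounds into pointwise decay for $w$ and $\pa w$), so taken as a proof of that lemma it would be circular.

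For the record, the paper does not prove this lemma from first principles either: it is quoted from Hoshiga--Kubo and Kubo, where the estimates are established (for a more general weight $W_-$) in the case $\xi=0$ by direct analysis of the 2D fundamental solution. The only new content the paper supplies is the remark that the case $\xi>0$ follows from the case $\xi=0$ by the elementary inequality $\jb{\tau+|y|}^{\xi}\le\jb{t+|x|}^{\xi}$ valid for $0\le\tau<t$ and $|y-x|\le t-\tau$, which lets one pull the factor $\jb{t+|x|}^{-\xi}$ out of the supremum defining $\op{B}$. If you want to prove the lemma rather than cite it, the work lies in the weighted estimates of the Duhamel representation $w(t,x)=\int_0^t\int_{|y-x|\le t-\tau}\bigl((t-\tau)^2-|y-x|^2\bigr)^{-1/2}\Psi(\tau,y)\,dy\,d\tau/(2\pi)$, splitting the integration region according to whether $\jb{\tau+|y|}$ is comparable to $\jb{t+|x|}$ and whether $W_-(\tau,|y|)=\jb{|y|}$ or $\jb{\tau-|y|}$; none of that appears in your proposal.
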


\begin{rmk}
In \cite{Kubo}, the above lemma is proved for more general weight $W_{-}$, but only for $\xi=0$.
However, using the fact that $\jb{\tau+|y|}^{\xi} \le \jb{t+|x|}^{\xi}$ for $(\tau, y)$ satisfying $0 \le \tau < t$ and $|y-x| \le t-\tau$, we can easily show the case where $\xi > 0$.
\end{rmk}

The following Sobolev type inequality is from Klainerman~\cite{Kla3}:
\begin{lem} \label{lem;xphix}
There exists a positive constant C such that 
\begin{align}
\sup_{x \in \R^2}\jb{x}^{1/2}|\varphi(x)| \le C \sum_{|\alpha|+|\beta| \le 2} \|\pa_x^\alpha \Omega^\beta \varphi \|_{L^2(\R^2)}
\label{xphix}
\end{align}
for any $\varphi \in C^{\infty}_{0}(\R^2)$.
\end{lem}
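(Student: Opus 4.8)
The inequality \eqref{xphix} is the instance of the Klainerman--Sobolev inequality in which only the rotation field $\Omega$ and the Cartesian derivatives $\pa_1,\pa_2$ are available; the gain $\jb{x}^{1/2}$ encodes that $\Omega$ controls the variation of $\varphi$ along circles centered at the origin, whose radius is comparable to $|x|$. The plan is to split $\R^2$ into the bounded region $\{|x|\le 1\}$, where the weight is harmless, and $\{|x|\ge 1\}$, which I would treat in polar coordinates by combining one--dimensional Sobolev estimates in the angular and radial variables.

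On $\{|x|\le 1\}$ we have $\jb{x}^{1/2}\le 2^{1/4}$, and the ordinary two--dimensional Sobolev embedding $H^2(\R^2)\hookrightarrow L^\infty(\R^2)$ gives $\sup_{|x|\le 1}\jb{x}^{1/2}|\varphi(x)|\le C\sum_{|\alpha|\le 2}\|\pa_x^\alpha\varphi\|_{L^2(\R^2)}$, which is dominated by the right-hand side of \eqref{xphix} (take $\beta=0$). Nothing more is needed here.

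For $|x|\ge 1$ I would pass to polar coordinates $x=r(\cos\theta,\sin\theta)$ and set $h(r,\theta)=\varphi(r\cos\theta,r\sin\theta)$. Two elementary identities are used: $\pa_\theta h(r,\theta)=(\Omega\varphi)(r\cos\theta,r\sin\theta)$ by the chain rule, and the polar radial derivative $\pa_r\psi=(x/|x|)\cdot\nabla\psi$ of any function $\psi$ on $\R^2$ satisfies $|\pa_r\psi|\le|\pa_1\psi|+|\pa_2\psi|$ pointwise. Fix $x_0$ with $R:=|x_0|\ge 1$, $x_0=R(\cos\theta_0,\sin\theta_0)$. The one--dimensional embedding $H^1(\mathbb{S}^1)\hookrightarrow L^\infty(\mathbb{S}^1)$, applied on the circle $\{|x|=R\}$, gives with a universal constant
\[
	|\varphi(x_0)|^2=|h(R,\theta_0)|^2\le C\int_{-\pi}^{\pi}\bigl(|h(R,\theta)|^2+|\pa_\theta h(R,\theta)|^2\bigr)\,d\theta.
\]
Next, for $g$ equal to $h$ or $\pa_\theta h$ and any fixed $\theta$, integrating $\pa_r(r\,g^2)=g^2+2r\,g\,\pa_r g$ from $r=R$ to $\infty$ and using that $\varphi$ is compactly supported yields
\[
	R\,g(R,\theta)^2=-\int_R^\infty\bigl(g^2+2r\,g\,\pa_r g\bigr)\,dr\le\int_R^\infty r\bigl(g^2+|\pa_r g|^2\bigr)\,dr\le\int_0^\infty r\bigl(g^2+|\pa_r g|^2\bigr)\,dr.
\]
Multiplying the circle estimate by $R$ and inserting this bound for $g=h$ and $g=\pa_\theta h$, I obtain
\[
	R\,|\varphi(x_0)|^2\le C\int_{-\pi}^{\pi}\!\int_0^\infty r\Bigl(|h|^2+|\pa_r h|^2+|\pa_\theta h|^2+|\pa_r\pa_\theta h|^2\Bigr)\,dr\,d\theta.
\]
Converting back via $dx=r\,dr\,d\theta$, and using $\pa_\theta h=(\Omega\varphi)\circ(\text{polar})$ together with the pointwise domination of $\pa_r$ by $|\pa_1\cdot|+|\pa_2\cdot|$, the right-hand side is at most $C\sum_{|\alpha|+|\beta|\le 2}\|\pa_x^\alpha\Omega^\beta\varphi\|_{L^2(\R^2)}^2$. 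Since $\jb{x_0}^{1/2}\le 2^{1/4}R^{1/2}$ for $R\ge 1$, this proves \eqref{xphix} on $\{|x|\ge 1\}$, and the proof is complete.

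\textbf{Main difficulty.} There is no serious obstruction; the only thing to monitor is that every constant is genuinely independent of $R$. This succeeds precisely because the circle--Sobolev constant is universal and the factor $R$ produced by the identity $R\,g(R,\theta)^2=-\int_R^\infty\pa_r(r g^2)\,dr$ is exactly the power of $R$ needed to build the weight $\jb{x}^{1/2}$, leaving no residual $R$--dependence. The remaining verifications are the two displayed elementary relations between $\pa_\theta,\pa_r$ and $\Omega$, $\nabla$.
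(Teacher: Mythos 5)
Your proof is correct. The paper gives no proof of this lemma, only a citation to Klainerman \cite{Kla3}, and your argument — interior region by ordinary $H^2(\R^2)\hookrightarrow L^\infty$, exterior region by the one–dimensional Sobolev embedding on circles combined with the radial identity $R\,g(R,\theta)^2=-\int_R^\infty\pa_r(r g^2)\,dr$ — is exactly the standard derivation of this weighted estimate; all the constants are indeed uniform in $R$, and every term produced is of the form $\pa_x^\alpha\Omega^\beta\varphi$ with $|\alpha|+|\beta|\le 2$, as required.
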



\section{The profile system for wave components} \label{section;profile}
For $x \in \R^2$, we use the polar coordinates $r=|x|$ and $\omega=|x|^{-1}x$ in the sequel.
We write
\begin{align*}
	\hat{\omega}:=(\omega_0, \omega_1, \omega_2)=(-1, \omega_1, \omega_2).
\end{align*}

We introduce
\begin{align*}
	\pa_{\pm}:=\pa_t \pm \pa_r \  \text{and} \ D_\pm:=(\pa_r \pm \pa_t)/2,
\end{align*}
where $\pa_r=\pa/\pa r=\sum^{2}_{j=1}(x_j/|x|)\pa_j$.
One of the important points in the method of the previous work \cite{KMatsuS} is to approximate $r^{1/2}\pa w(t,x)$ by $\hat{\omega}D_{-}\bigl(r^{1/2}w(t,x) \bigr)$;
however the scaling operator $S=t\pa_t+r\pa_r$, which is not compatible with Klein-Gordon equations, was used to estimate the error between them.
To recover the estimate without using the scaling operator $S$, we define
\begin{align}
	[\phi(t,x)]_s:=|\phi(t,x)|_s+\jb{t-r}|\pa \phi(t,x)|_{s-1}
\end{align}
for a natural number $s$ and a smooth function $\phi=\phi(t,x)$.

\begin{lem} \label{lem;profile}
	There exists a universal positive constant $C$ such that	
	\begin{align*}
		\left| r^{1/2} \pa \phi(t,x)-\hat{\omega} D_{-} \left( r^{1/2} \phi (t,x) \right) \right| \le C\jb{t+|x|}^{-1/2} [\phi(t,x)]_1, \ \ 1 \le \frac{t}{2} \le r
	\end{align*}
	holds for any smooth function $\phi=\phi(t,x)$.
\end{lem}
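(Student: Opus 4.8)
The plan is to compute the difference explicitly in polar coordinates and identify the terms that can be controlled by the Klainerman vector fields together with the extra weight $\jb{t-r}|\pa\phi|_0$ packaged in $[\phi]_1$. First I would write $\pa = (\pa_t, \pa_1, \pa_2)$ and recall the standard identities expressing $\pa_a$ in terms of $\pa_\pm$, $\pa_r$ and the rotation: for $a=0$ we have $\pa_t = D_+ + D_-$ (up to the sign convention in $D_\pm$), while $\pa_j = \omega_j \pa_r + r^{-1}(\text{tangential part involving }\Omega)$ for $j=1,2$. The key point is that $r^{1/2}\pa_a\phi$ should match $\hat\omega_a D_-(r^{1/2}\phi)$ modulo three kinds of error: (i) the "good derivative" $\pa_+\phi = \pa_t\phi + \pa_r\phi$, which is $2D_+\phi$ and carries an extra decay factor $\jb{t-r}/\jb{t+r}$ in the region $t\le 2r$ because $\pa_+$ acting on a wave-type function is $O(\jb{t-r}^{-1})$-better — more precisely $\pa_+\phi$ can be bounded using $\Gamma\phi$ via the identity $(t+r)\pa_+ = $ (combination of $S$, $L$, $\Omega$); but since we cannot use $S$, I would instead bound $\pa_+\phi$ crudely by $|\pa\phi|_0$ and absorb it against $\jb{t+r}^{-1/2}$, using that the target bound has $\jb{t-r}|\pa\phi|_0$ available; (ii) the tangential/rotational contribution $r^{-1}\Omega\phi$, which is $\le r^{-1}|\phi|_1 \le C\jb{t+r}^{-1/2}\cdot\jb{t+r}^{-1/2}|\phi|_1$ since $r \gtrsim t+r$ in the regime $t/2\le r$; (iii) the commutator term from differentiating the weight $r^{1/2}$, namely $(\pa_r r^{1/2})\phi = \tfrac12 r^{-1/2}\phi$, which is $O(r^{-1/2})|\phi|_0 \le C\jb{t+r}^{-1/2}|\phi|_0$ directly — here no extra smallness is needed.

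The core computation is therefore: $\hat\omega_a D_-(r^{1/2}\phi) = \hat\omega_a\left( r^{1/2}D_-\phi + \tfrac12 r^{-1/2}(\pa_r r^{1/2}/\pa_r\text{-coeff})\phi\right)$, and separately $r^{1/2}\pa_a\phi$; subtracting and using $\pa_a = \hat\omega_a\pa_r + (\text{tangential})$ for the spatial components and $\pa_t = -\hat\omega_0\pa_t$, the principal parts $r^{1/2}\hat\omega_a\pa_r\phi$ cancel against the $r^{1/2}\hat\omega_a D_-\phi$ piece \emph{up to} a $D_+\phi = \tfrac12\pa_+\phi$ term, since $D_- = (\pa_r - \pa_t)/2$ while $\pa_r = D_+ + D_-$ and $\pa_t = D_+ - D_-$ (modulo signs). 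Thus the residual is a fixed linear combination of: $r^{1/2}\pa_+\phi$, $r^{-1/2}\Omega\phi$, and $r^{-1/2}\phi$. For the first, I use $2r\le t+r+\text{const}$ is false in general but $r\le t+r$ always, and more importantly in the region $1\le t/2\le r$ one has $\jb{t+r}\le C r$ and also $r^{1/2} = r^{-1/2}\cdot r \le C\jb{t+r}^{-1/2}\cdot\jb{t+r}$; but that overshoots, so instead I bound $r^{1/2}|\pa_+\phi| \le r^{1/2}\jb{t-r}^{-1}\jb{t-r}|\pa\phi|_0$ is still not enough — so the honest route is: $\pa_+\phi$ itself does \emph{not} have extra decay a priori, hence I must keep $\jb{t-r}|\pa\phi|_0$ on the right and note $r^{1/2}|\pa_+\phi|\le r^{1/2}|\pa\phi|_0$, and since $r\ge t/2\ge 1$ we cannot gain $\jb{t+r}^{-1}$ from $r$ alone. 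The resolution, and the genuine technical heart, is to bound $\pa_+\phi$ using the vector fields $L_j$: from $L_j = x_j\pa_t + t\pa_j$ one derives $\omega_j L_j/t = \omega_j^2\pa_t + \omega_j\pa_j$ summed, giving $(r/t)\pa_t + \pa_r$, so $\pa_+\phi = \pa_t\phi+\pa_r\phi = \tfrac{1}{t}\sum_j\omega_j L_j\phi + (1 - r/t)\pa_t\phi$, and $|1-r/t| = |t-r|/t \le \jb{t-r}/t$, while $1/t \le \jb{t+r}^{-1}\cdot C$ and also $1/t\le 2/(t+r) \le C\jb{t+r}^{-1}$ in this region. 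Hence $r^{1/2}|\pa_+\phi| \le Cr^{1/2}\jb{t+r}^{-1}\big(|\phi|_1 + \jb{t-r}|\pa\phi|_0\big) \le C\jb{t+r}^{-1/2}[\phi]_1$, using $r^{1/2}\le C\jb{t+r}^{1/2}$.

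The main obstacle, as indicated, is exactly this handling of $\pa_+\phi$ without the scaling operator $S$: in \cite{KMatsuS} one writes $r\pa_+ \sim S - (\text{rotations/}L)$ and exploits $S$; here the substitute is the elementary identity $t\,\pa_+\phi = \sum_j\omega_j L_j\phi - (t-r)\pa_t\phi$ (valid pointwise, no PDE used), which converts the "bad" $\pa_+$ into one unit of $\Gamma$ plus the weight $\jb{t-r}|\pa\phi|_0$ — and the latter is precisely why the bracket $[\phi]_s$ was defined with that second term. Once this is in place, collecting (i)–(iii) and using $1\le t/2\le r \Rightarrow \jb{t+r}\le Cr \le C\jb{t+r}$ (so $r^{\pm1/2}$ and $\jb{t+r}^{\pm1/2}$ are interchangeable up to constants, and $1/t\le C\jb{t+r}^{-1}$) yields the claimed inequality with a universal constant. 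I would present the proof as: (1) the polar-coordinate identities for $\pa_a$ and for $D_-(r^{1/2}\,\cdot)$; (2) the algebraic cancellation of principal parts, leaving the three residual terms; (3) the estimate $|t\,\pa_+\phi|\le C(|\phi|_1 + \jb{t-r}|\pa\phi|_0)$; (4) assembling the bounds, invoking $r\gtrsim\jb{t+r}$ on the stated region.
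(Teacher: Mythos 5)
Your overall architecture matches the paper's: reduce to the spatial‐derivative decomposition $\pa_j=\omega_j\pa_r + r^{-1}(\text{angular part})\Omega$, trade $\pa_t$ and $\pa_r$ for $D_\pm$, observe the residual is $D_+$ acting on $r^{1/2}\phi$ (plus the harmless $r^{-1/2}\phi$ and $r^{-1/2}\Omega\phi$ pieces, both directly absorbed by $\jb{t+|x|}^{-1/2}|\phi|_1$ since $r$ and $t+r$ are comparable when $t/2\le r$), and then eliminate $\pa_+$ by the Klainerman boosts $L_j$ at the cost of a $(t-r)$-weighted derivative. That last trade-off is exactly why $[\phi]_1$ carries the term $\jb{t-r}|\pa\phi|$; you identified the crux correctly.

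However, the specific identity you choose for the $\pa_+$ step does not close. You write
\[
\pa_+\phi = \frac{1}{t}\sum_{j=1}^{2}\omega_j L_j\phi + \Bigl(1-\frac{r}{t}\Bigr)\pa_t\phi,
\]
which is a correct pointwise identity, but then you assert $1/t \le C\jb{t+r}^{-1}$ (and $1/t\le 2/(t+r)$) ``in this region.'' That is false on the set $\{1\le t/2\le r\}$: the hypothesis gives only a \emph{lower} bound $r\ge t/2$, so $r$ (and hence $t+r$) can be arbitrarily large compared with $t$. Taking, say, $t=2$ and $r\to\infty$ shows $r^{1/2}/t\to\infty$ while $\jb{t+r}^{-1/2}\to 0$, so the proposed bound
\[
r^{1/2}|\pa_+\phi|\le C\,r^{1/2}\,\jb{t+r}^{-1}\bigl(|\phi|_1+\jb{t-r}|\pa\phi|\bigr)
\]
is not implied by your identity and is in fact unprovable from it. The paper (and the correct fix) uses instead the algebraic identity with $(t+r)$ in the prefactor,
\[
D_+ = \frac{1}{2(t+r)}\left(2\sum_{k=1}^{2}\omega_k L_k + (t-r)(\pa_t - \pa_r)\right),
\]
which yields $|D_+\phi|\le C(t+r)^{-1}\bigl(|\phi|_1+\jb{t-r}|\pa\phi|\bigr)$, and then $r^{1/2}(t+r)^{-1}\le (t+r)^{-1/2}\le C\jb{t+r}^{-1/2}$ since $r\le t+r$. (Equivalently, $r\,\pa_+ = \sum_j\omega_j L_j + (r-t)\pa_r$ also works, because $r^{-1/2}\le C\jb{t+r}^{-1/2}$ on $t/2\le r$.) In short: the boost–versus–$(t-r)$ exchange is the right idea, but you must normalize by $t+r$ (or by $r$), not by $t$, since only the former is comparable to $\jb{t+|x|}$ throughout the unbounded region $\{1\le t/2\le r\}$.
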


\begin{proof}
	Let $1 \le t/2 \le r$.
	Since we have $\pa_1=\omega_1\pa_r-r^{-1}\omega_2\Omega$ and $\pa_2=\omega_2\pa_r+r^{-1}\omega_1\Omega$, we obtain 
	\begin{align*}
		|r^{1/2}\pa_j \phi-\omega_j \pa_r (r^{1/2} \phi)| \le C \jb{t+r}^{-1/2}|\phi|_1, \ \ j=1,2.
	\end{align*}
	Therefore our task is to estimate $\pa_t(r^{1/2}\phi)$ and $\omega_j \pa_r (r^{1/2} \phi)$ for $j=1,2$.
	Observing that $\pa_t =-D_- +D_+=\omega_0 D_- +D_+$ and $\omega_j \pa_r=\omega_jD_- +\omega_jD_+$, it suffices to prove
	\begin{align}
		|D_+(r^{1/2}\phi)| \le C \jb{t+r}^{-1/2}[\phi]_1.
		\label{eq;D+est}
	\end{align}
	Since we have $D_+(r^{1/2}\phi)=r^{-1/2}\phi/4+r^{1/2}D_+ \phi$ and 
	\begin{align}
		D_+=\frac{1}{2(t+r)}\left( 2\sum_{k=1}^{2}\omega_kL_k+(t-r)(\pa_t-\pa_r) \right),
	\end{align}
	we get \eqref{eq;D+est} immediately.
\end{proof}

\begin{rmk}
	By an apparent modification of the above proof, we also have
	\begin{align}
		|\pa \phi(t,x)-\hat{\omega}D_{-}\phi(t,x)| \le C\jb{t+r}^{-1}[\phi(t,x)]_1, \ \ 1 \le \frac{t}{2} \le r. 
		\label{rmk;profile}
	\end{align}
\end{rmk}

For $R>0$, we set
\begin{align*}
	B_R=\{x \in \R^2; |x| \le R \}.
\end{align*}
Let $0 < T \le \infty$.
Suppose that $u=(v,w)$ is a solution to \eqref{eq;nkgw}-\eqref{eq;id} on $[0, T) \times \R^2$.
As $f, g \in C^{\infty}_{0}$, we can take a positive constant $R$ such that
\begin{align}
	\supp f \cup \supp g \subset B_{R},
	\label{eq;idfinite}
\end{align}
which implies
\begin{align}
	\supp u(t, \cdot) \subset B_{t+R}, \ \ 0 \le t <T
	\label{eq;finite}
\end{align}
by the finite propagation property.
Keeping this in mind, we define
\begin{align}
	\Lambda_{T,R}:=\left\{(t,x) \in [0,T) \times \R^2 ; 1 \le \frac{t}{2} \le |x| \le t+R\right\},
	\label{dfn;light cone}
\end{align}
and we write $\Lambda_R:=\Lambda_{\infty, R}$.
Note that the weights $\jb{t+r}^{-1}$, $\jb{r}^{-1}$, $\jb{t}^{-1}$, $r^{-1}$ and $t^{-1}$ are equivalent to each other in $\Lambda_{R}$, since we have
\begin{align}
	\jb{t+r}^{-1} \le \jb{r}^{-1} \le r^{-1} \le 2t^{-1} \le \sqrt{5}\jb{t}^{-1} \le \sqrt{5}(2+R)\jb{t+r}^{-1}, (t,x) \in \Lambda_R.
\end{align}
Writing the d'Alembertian in the polar coordinates, we obtain
\begin{align}
	r^{1/2}\Box\phi=\pa_+\pa_-(r^{1/2}\phi)-\frac{1}{4r^{3/2}}(4\Omega^2+1)\phi
	\label{eq;rbox}
\end{align}
for any smooth function $\phi=\phi(t,x)$.
We define $\op{W}=(\op{W}_j)_{N_0+1 \le j \le N}$ by
\begin{align}
	\op{W}(t,x):=D_-\bigl(r^{1/2}w(t,x)\bigr), \ \ (t,x) \in [0, T) \times (\R^2\backslash\{0\}).
	\label{eq;Wdfn}
\end{align}
Then \eqref{eq;rbox} leads to 
\begin{align} 
	\pa_+ \op{W}(t,x)=-\frac{1}{2t}F^{\red}_{\W}\bigl(\omega, \op{W}(t,x)\bigr)+H(t,x),
	\label{eq;profile}
\end{align}
where $H=H(t,x)$ is defined by
\begin{align}
	H=-\frac{1}{2}\left(r^{1/2}F_{\W}(v, \pa u)-\frac{1}{t}F^{\red}_{\W}(\omega, \op{W})\right)-\frac{1}{8r^{3/2}}(4\Omega^2+1)w.
\end{align}
We call \eqref{eq;profile} the \textit{profile system for wave components}, as $H$ can be treated as a reminder in $\Lambda_{T, R}$ (see Lemma \ref{lem;Hest} below).

For $\sigma \in \R$, we define
\begin{align}
	t_0(\sigma)=\max\{-2\sigma, 2\},
\end{align}
and
\begin{align*}
	\op{W}^{\ast}_{0}(\sigma, \omega)=\op{W}\bigl( t, (t+\sigma)\omega \bigr) \Big|_{t=t_0(\sigma)}.
\end{align*}
Observe that we have 
\begin{align}
	\jb{R}^{-1}\jb{\sigma} \le t_0(\sigma) \le 2\jb{\sigma}, \ \ \sigma \le R.
	\label{eq;t0sigma}
\end{align}
The following lemma explains how the profile system is used to obtain the decay estimate for $\pa w$ in $\Lambda_{T, R}$, and this is the only part where we use our condition (W).
This part is already proved implicitly in \cite{KMatsuS}, but we give a proof for its importance.

\begin{lem} \label{lem;rdw}
	Suppose that the condition {\rm (W)} is satisfied. Then we have
	\begin{align*}
		\jb{r}^{1/2}|\pa w(t,x)| &\le C \left( \ab{\op{W}^{\ast}_{0}(r-t, \omega)}+\int^{t}_{t_0(r-t)} \ab{H\bigl( \tau, (r-t+\tau)\omega \bigr)} d\tau \right) \\
		&\quad + C\jb{t+r}^{-1/2}[w(t,x)]_1
	\end{align*}
	for $(t,x)=(t,r\omega) \in \Lambda_{T,R}$, where $C$ is a positive constant independent of $T$.
\end{lem}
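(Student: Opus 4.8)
The plan is to integrate the profile equation \eqref{eq;profile} along the outgoing ray $\tau \mapsto (\tau, (r-t+\tau)\omega)$ and then transfer the resulting bound on $\op{W}$ back to $\pa w$ via Lemma \ref{lem;profile}. Fix $(t,x)=(t,r\omega)\in\Lambda_{T,R}$ and set $\sigma:=r-t$, which satisfies $\sigma\le R$ since $|x|\le t+R$. Along the ray the operator $\pa_+=\pa_t+\pa_r$ acts as the total derivative $\tfrac{d}{d\tau}$, and the starting time $t_0(\sigma)=\max\{-2\sigma,2\}$ is chosen precisely so that the point $(t_0(\sigma),(t_0(\sigma)+\sigma)\omega)$ lies on the boundary of the region $1\le \tau/2\le |y|$ (so that the whole ray segment $\tau\in[t_0(\sigma),t]$ stays in $\Lambda_{T,R}$, where Lemma \ref{lem;profile} and the weight equivalences apply). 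First I would record the identity
\begin{align*}
	\op{W}(t,x)=\op{W}^{\ast}_0(\sigma,\omega)-\int_{t_0(\sigma)}^{t}\frac{1}{2\tau}F^{\red}_{\W}\bigl(\omega,\op{W}(\tau,(\sigma+\tau)\omega)\bigr)\,d\tau+\int_{t_0(\sigma)}^{t}H(\tau,(\sigma+\tau)\omega)\,d\tau.
\end{align*}

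The key step is to eliminate the seemingly dangerous middle integral using the sign condition (W). Introduce the energy-type quantity $e(\tau):=\op{W}(\tau,(\sigma+\tau)\omega)^{\T}\op{J}(\omega)\op{W}(\tau,(\sigma+\tau)\omega)$ along the ray. Differentiating in $\tau$, using \eqref{eq;profile} and the symmetry of $\op{J}(\omega)$, gives
\begin{align*}
	\frac{d}{d\tau}e(\tau)=-\frac{1}{\tau}\op{W}^{\T}\op{J}(\omega)F^{\red}_{\W}(\omega,\op{W})+2\op{W}^{\T}\op{J}(\omega)H(\tau,(\sigma+\tau)\omega).
\end{align*}
By condition (W) the first term on the right is $\le 0$ (here $\tau\ge t_0(\sigma)\ge 2>0$), so $e$ is, up to the $H$-contribution, non-increasing; integrating and using the uniform positive-definiteness and boundedness of $\op{J}\in C(\mathbb{S}^1;\op{S}^{N_1}_+)$ yields
\begin{align*}
	|\op{W}(t,x)|\le C\Bigl(|\op{W}^{\ast}_0(\sigma,\omega)|+\int_{t_0(\sigma)}^{t}|H(\tau,(\sigma+\tau)\omega)|\,d\tau\Bigr).
\end{align*}
(One must also invoke $\sqrt{e(t_0)}\le C|\op{W}^{\ast}_0(\sigma,\omega)|$ and Cauchy--Schwarz on the $H$-term together with a standard Gronwall/bootstrap absorption of $\sup_{\tau}\sqrt{e(\tau)}$, since $H$ itself may depend on $\op{W}$; this is where a little care is needed but it is routine.)

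Finally I would connect $\op{W}$ to $\pa w$. Recall $\op{W}(t,x)=D_-(r^{1/2}w(t,x))$, and Lemma \ref{lem;profile} (applied to each component $w_j$, legitimate since $(t,x)\in\Lambda_{T,R}$ so $1\le t/2\le r$) gives
\begin{align*}
	\bigl|r^{1/2}\pa w(t,x)-\hat\omega D_-(r^{1/2}w(t,x))\bigr|\le C\jb{t+|x|}^{-1/2}[w(t,x)]_1.
\end{align*}
Since $|\hat\omega|=\sqrt{2}$ is bounded, $|r^{1/2}\pa w(t,x)|\le C|\op{W}(t,x)|+C\jb{t+|x|}^{-1/2}[w(t,x)]_1$, and combining with the bound on $|\op{W}(t,x)|$ above, then replacing $r^{1/2}$ by $\jb{r}^{1/2}$ at the cost of a harmless constant (again using the weight equivalences in $\Lambda_R$, or simply $\jb{r}^{1/2}\le C r^{1/2}$ for $r\ge t/2\ge 1$), produces exactly the claimed inequality. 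The main obstacle is the second step: showing that the contribution of the quadratic-in-$\op{W}$ term $F^{\red}_{\W}(\omega,\op{W})/\tau$ does not destroy the estimate. The point of condition (W) is precisely that, after testing against $\op{J}(\omega)$, this term has a favorable sign, so it can simply be discarded rather than estimated — this is the one and only place the hypothesis enters, as stated.
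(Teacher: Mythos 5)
Your argument matches the paper's proof: integrate the profile equation along the outgoing ray, test against $\op{J}(\omega)$, discard the sign-favorable cubic term by condition (W), and transfer the resulting bound on $\op{W}$ back to $\pa w$ via Lemma \ref{lem;profile}. The only small inaccuracy is the parenthetical about needing a Gronwall/bootstrap absorption: none is required. From $\pa_\tau e \le C\sqrt{e}\,|H^{\ast}|$ one obtains $\pa_\tau \sqrt{e} \le C|H^{\ast}|$ directly (regularizing via $\sqrt{e+c}$ and sending $c\to 0$, exactly as the paper does in the proof of Lemma \ref{lem;dGw}), which integrates to $\sqrt{e(t)} \le \sqrt{e(t_0)} + C\int_{t_0}^{t}|H^{\ast}|\,d\tau$ and then, by the two-sided bound on $\op{J}$, to the claimed estimate. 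The fact that $H$ may depend on $\op{W}$ is harmless here, since the lemma's conclusion keeps $H$ on the right-hand side as a given quantity; the bootstrap closure is deferred to Section \ref{section;sdge}.
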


\begin{proof}
	In view of Lemma \ref{lem;profile}, it suffices to estimate $\op{W}(t,x)$.
	Let $\op{J}=\op{J}(\omega)$ be from the condition (W).
	As $\op{J} \in C(\mathbb{S}^{1}; \op{S}^{N_1}_{+})$, there is a positive constant $C$ such that
	\begin{align}
		\frac{1}{C}|Y|^2 \le Y^{\rm T}\op{J}(\omega)Y \le C|Y|^2, \ \ \omega \in \mathbb{S}^1, Y \in \R^{N_1}.
		\label{eq;YJYest}
	\end{align}
	Suppose that $(t,x) \in \Lambda_{T,R}$.
	Let $\sigma=r-t$ and we put
	\begin{align*}
		\op{W}^{\ast}(t, \sigma, \omega)=\op{W}\bigl(t, (t+\sigma)\omega\bigr),
	\end{align*}
	so that we have 
	\begin{align*}
		\pa_t\op{W}^{\ast}(t, \sigma, \omega)=(\pa_+ \op{W})\bigl(t, (t+\sigma)\omega\bigr)=-\frac{1}{2t}F^{\red}_{\W}\bigl(\omega, \op{W}^{\ast}(t, \sigma, \omega)\bigr)+H^{\ast}(t, \sigma, \omega),
	\end{align*}
	where $H^{\ast}(t, \sigma, \omega)=H\bigl(t, (t+\sigma)\omega\bigr)$.
	Recalling that $\op{J}$ is real symmetric, and using the condition (W), we obtain
	\begin{align*}
		\pa_t \bigl\{(\op{W}^{\ast}\bigr)^{\rm T} \op{J} \op{W}^{\ast} \bigl\} &=2(\op{W}^{\ast})^{\rm T} \op{J} \pa_t \op{W}^{\ast}=-\frac{1}{t}(\op{W}^{\ast})^{\rm T}\op{J}F_{\W}^{\red}(\omega, \op{W}^{\ast})+2(\op{W}^{\ast})^{\rm T} \op{J}H^{\ast} \\
		&\le 2(\op{W}^{\ast})^{\rm T} \op{J}H^{\ast} \le C \sqrt{(\op{W}^{\ast})^{\T}\op{J}\op{W}^{\ast}}\ab{H^{\ast}}.
	\end{align*}
	As we have $t \ge t_0(\sigma)$ in $\Lambda_{T,R}$, the above differential inequality implies
	\begin{align*}
		\ab{\op{W}^{\ast}(t, \sigma, \omega)} \le C \left( \ab{\op{W}^{\ast}_{0}(\sigma, \omega)}+\int^{t}_{t_0(\sigma)}|H^{\ast}(\tau, \sigma, \omega)|d\tau \right),
	\end{align*}
	which leads to the desired estimate for $\op{W}$ immediately. 
\end{proof}

We will play a similar game for $\Gamma^\alpha w$ with a multi-index $\alpha \in \Z^{6}_{+}$, but the condition (W) will not be used.
We define
\begin{align}
	\op{W}^{(\alpha)}(t,x):=D_{-}\bigl(r^{1/2}\Gamma^{\alpha}w(t,x)\bigr).
\end{align}
As $\Box(\Gamma^\alpha w)=\Gamma^\alpha \bigl(F(v, \pa u) \bigr)$, it follows from \eqref{eq;rbox} that 
\begin{align}
	\pa_+ \op{W}^{(\alpha)}=-\frac{1}{2t}\op{G}(\omega, \op{W})\op{W}^{(\alpha)}+H_{\alpha}
	\label{eq;D+Wa}
\end{align}
for $|\alpha| \ge 1$, where a matrix-valued function $\op{G}=(\op{G}_{jk})_{N_0+1 \le j,k \le N}$ and a vector-valued function $H_\alpha$ are given by
\begin{align*}
	\op{G}_{jk}(\omega, Y)=\frac{\pa F^{\red}_{j}}{\pa Y_k}(\omega, Y),
\end{align*}
and
\begin{align}
	H_\alpha(t,x)=-\frac{1}{2}\left(r^{1/2}\Gamma^{\alpha}\bigl(F_{\W}(v, \pa u)\bigr)-\frac{1}{t}\op{G}(\omega, \op{W})\op{W}^{(\alpha)}\right)-\frac{1}{8r^{3/2}}(4\Omega^2+1)\Gamma^{\alpha}w,
	\label{eq;Halpha}
\end{align}
respectively.
For a square matrix $\op{A}$, $\|\op{A}\|_{\Op}$ denotes its operator norm in the sequel.

\begin{lem} \label{lem;dGw}
	Let $|\alpha|=s \ge 1$.
	We define
	\begin{align*}
		\op{W}^{(\alpha)\ast}_{0}(\sigma, \omega):=\op{W}^{(\alpha)}\bigl(t, (t+\sigma)\omega\bigr)\Big|_{t=t_0(\sigma)}.
	\end{align*}
	If there is a positive constant $C_0$ such that
	\begin{align*}
		\left\| \op{G}\bigl(\omega, \op{W}(t,x)\bigr) \right\|_{\Op} \le 2C_0 \eps^2, \ \ (t,x) \in \Lambda_{T,R},
	\end{align*}
	then there is a positive constant $C$, which is independent of $T$, such that
	\begin{align*}
		\jb{r}^{1/2}|\pa \Gamma^{\alpha} w(t,x)| &\le C\left(\frac{t}{t_0(r-t)}\right)^{C_0 \eps^2} \ab{\op{W}^{(\alpha)\ast}_0(r-t, \omega)} \\
		&\quad +t^{C_0\eps^2} \int^{t}_{t_0(r-t)}\tau^{-C_0\eps^2}\ab{H_\alpha \bigl(t, (r-t+\tau)\omega \bigr)} d\tau \\
		&\quad +C\jb{t+r}^{-1/2}[w(t,x)]_{s+1}
	\end{align*}
	for $(t,x)=(t,r\omega) \in \Lambda_{T,R}$.
\end{lem}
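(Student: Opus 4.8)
The plan is to mimic the proof of Lemma \ref{lem;rdw}, but with the scalar differential inequality replaced by a linear vector ODE along the characteristics of $\pa_+$. As in the previous lemma, Lemma \ref{lem;profile} applied to $\Gamma^\alpha w$ reduces the estimate for $\jb{r}^{1/2}|\pa \Gamma^\alpha w(t,x)|$ to an estimate for $|\op{W}^{(\alpha)}(t,x)|$, modulo the error term $C\jb{t+r}^{-1/2}[\Gamma^\alpha w(t,x)]_1$, which by \eqref{eq;Cs} and the discussion around it is controlled by $C\jb{t+r}^{-1/2}[w(t,x)]_{s+1}$. So the core is to integrate \eqref{eq;D+Wa} along the ray $t \mapsto (t, (t+\sigma)\omega)$ with $\sigma = r-t$ fixed.

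First I would set $\op{W}^{(\alpha)\ast}(t,\sigma,\omega) = \op{W}^{(\alpha)}(t,(t+\sigma)\omega)$ and $H_\alpha^\ast(t,\sigma,\omega) = H_\alpha(t,(t+\sigma)\omega)$, so that \eqref{eq;D+Wa} becomes the linear ODE in $t$
\begin{align*}
	\pa_t \op{W}^{(\alpha)\ast}(t,\sigma,\omega) = -\frac{1}{2t}\op{G}\bigl(\omega, \op{W}(t,(t+\sigma)\omega)\bigr)\op{W}^{(\alpha)\ast}(t,\sigma,\omega) + H_\alpha^\ast(t,\sigma,\omega).
\end{align*}
Next I would bound the growth of $|\op{W}^{(\alpha)\ast}|$ using the assumed operator-norm bound $\|\op{G}\|_{\Op} \le 2C_0\eps^2$ on $\Lambda_{T,R}$. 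Differentiating $|\op{W}^{(\alpha)\ast}|^2$ (or working directly with $\pa_t|\op{W}^{(\alpha)\ast}| \le |\pa_t \op{W}^{(\alpha)\ast}|$ away from zeros, then using the standard approximation/regularization), one gets
\begin{align*}
	\pa_t |\op{W}^{(\alpha)\ast}(t,\sigma,\omega)| \le \frac{C_0\eps^2}{t}|\op{W}^{(\alpha)\ast}(t,\sigma,\omega)| + |H_\alpha^\ast(t,\sigma,\omega)|.
\end{align*}
Since $t \ge t_0(\sigma)$ throughout $\Lambda_{T,R}$, Gronwall's inequality with integrating factor $t^{-C_0\eps^2}$ (note $\pa_t t^{-C_0\eps^2} = -C_0\eps^2 t^{-1-C_0\eps^2}$) yields
\begin{align*}
	|\op{W}^{(\alpha)\ast}(t,\sigma,\omega)| \le \left(\frac{t}{t_0(\sigma)}\right)^{C_0\eps^2}|\op{W}^{(\alpha)\ast}_0(\sigma,\omega)| + t^{C_0\eps^2}\int^t_{t_0(\sigma)} \tau^{-C_0\eps^2}|H_\alpha^\ast(\tau,\sigma,\omega)|\,d\tau.
\end{align*}
Combining this with Lemma \ref{lem;profile} (and the remark that $[\Gamma^\alpha w]_1 \le C[w]_{s+1}$, using $[\Gamma,\Gamma]=\Gamma$ and $[\Gamma,\pa]=\pa$ as in \eqref{eq;Cs}) gives the claimed bound after substituting $\sigma = r-t$.

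The main obstacle I expect is handling the factor $\frac{1}{2t}$ correctly in the exponential weight rather than the matrix structure itself — i.e. verifying that the Gronwall constant is genuinely $C_0\eps^2$ (not $2C_0\eps^2$) requires a factor-of-$\tfrac12$ bookkeeping between $\|\op{G}\|_{\Op}\le 2C_0\eps^2$ and the coefficient $\tfrac{1}{2t}$, and checking that $\int_{t_0(\sigma)}^t \tau^{-C_0\eps^2}\,d\tau$-type terms do not secretly worsen the power of $t$. A secondary technical point is the non-smoothness of $t\mapsto |\op{W}^{(\alpha)\ast}(t,\sigma,\omega)|$ at its zeros; this is routine (replace $|Y|$ by $\sqrt{|Y|^2+\delta}$ and let $\delta\to 0$, or use that $\pa_t|\op{W}^{(\alpha)\ast}|^2 = 2(\op{W}^{(\alpha)\ast})^{\T}\pa_t\op{W}^{(\alpha)\ast}$ and divide), so I would only remark on it. Everything else — writing $\Box\Gamma^\alpha w = \Gamma^\alpha(F(v,\pa u))$ via \eqref{eq;comm}, the polar-coordinate identity \eqref{eq;rbox}, and the definition of $H_\alpha$ in \eqref{eq;Halpha} — is already set up in the excerpt and plugs in directly.
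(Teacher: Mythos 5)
Your proposal matches the paper's proof essentially line for line: reduce to $\op{W}^{(\alpha)}$ via Lemma \ref{lem;profile}, pass to the $\ast$-variables along characteristics, use $\|\op{G}\|_{\Op}\le 2C_0\eps^2$ together with the $\tfrac{1}{2t}$ coefficient to get the rate $C_0\eps^2/t$, and Gronwall with integrating factor $t^{-C_0\eps^2}$; the regularization $\Phi_c=\sqrt{c+|\op{W}^{(\alpha)\ast}|^2}$, $c\to+0$ that you mention is precisely what the paper uses. The factor-of-$\tfrac12$ bookkeeping you flagged does work out cleanly, so there is no gap.
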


\begin{proof}
	By Lemma \ref{lem;profile} again, we only have to estimate $\op{W}^{(\alpha)}(t,x)$.
	using the $\ast$-notation as in the proof of the previous lemma, from \eqref{eq;D+Wa} we get
	\begin{align*}
		\pa_t \ab{\op{W}^{(\alpha)\ast}}^2 \le \frac{2C_0\eps^2}{t}\ab{\op{W}^{(\alpha)\ast}}^2+2\ab{\op{W}^{(\alpha)\ast}} |H^{\ast}_{\alpha}|.
	\end{align*}
	Let $c>0$ and $\Phi_c=\sqrt{c+\left|\op{W}^{(\alpha)\ast}\right|^2}$.
	Then we get
	\begin{align*}
		\pa_t\Phi_c \le \frac{C_0\eps^2}{t}\Phi_c+|H^{\ast}_{\alpha}|,
	\end{align*}
	from which we get
	\begin{align*}
		\pa_t\left(t^{-C_0\eps^2}\Phi_c\right) \le t^{-C_0\eps^2}|H^{\ast}_{\alpha}|.
	\end{align*}
	Therefore we obtain
	\begin{align*}
		t^{-C_0\eps^2}\Phi_c(t,\sigma,\omega) \le t_0(\sigma)^{-C_0\eps^2}\Phi_c\bigl(t_0(\sigma), \sigma, \omega \bigr)+\int^{t}_{t_0(\sigma)}\tau^{-C_0\eps^2}|H^{\ast}_{\alpha}(\tau, \sigma, \omega)|d\tau.
	\end{align*}
	By taking the limit as $c \rightarrow +0$, we obtain the desired estimate for $\op{W}^{(\alpha)}$.
\end{proof}

Finally we give preliminary estimates for $H$ and $H_\alpha$.
\begin{lem} \label{lem;Hest}
	It holds that
	\begin{align}
		\notag
		|H(t,x)| &\le Ct^{-1/2}\jb{t-r}^{-2}[w]^3_1+Ct^{-3/2}|w|_2\\
		&\quad +Ct^{1/2}|(v, \pa u)|^2 \bigl(|v|_1+|(v, \pa u)|^2 \bigr)
		\label{eq;Hest}
	\end{align}
	for $(t,x) \in \Lambda_{T,R}$, where $C$ is a positive constant independent of $T$.
	If $|\alpha|=s \ge 1$, then there is a positive constant $C_s$, being independent of $T$, such that
	\begin{align}
		\notag
		|H_{\alpha}(t,x)| &\le C_s t^{1/2}|\pa w|^{3}_{s-1}+C_s t^{-1/2}\jb{t-r}^{-2}[w]^{3}_{s+1}+C_s t^{-3/2}|w|_{s+2} \\
		&\quad +C_s t^{1/2} |(v, \pa u)|^{2}_{s} \bigl(|v|_{s+1}+|(v, \pa u)|^{2}_{s} \bigr)
		\label{eq;Halphaest}
	\end{align}
	for $(t,x) \in \Lambda_{T,R}$.
\end{lem}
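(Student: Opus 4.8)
The plan is to expand both $H$ and $H_\alpha$ according to their definitions and estimate each summand separately in the region $\Lambda_{T,R}$, where all the scale weights $\jb{t+r}$, $\jb{r}$, $\jb{t}$, $r$, $t$ are comparable. Recall
\[
H=-\frac{1}{2}\left(r^{1/2}F_{\W}(v,\pa u)-\frac{1}{t}F^{\red}_{\W}(\omega,\op{W})\right)-\frac{1}{8r^{3/2}}(4\Omega^2+1)w,
\]
so there are three groups of terms. The last one is immediate: $\frac{1}{8r^{3/2}}|(4\Omega^2+1)w|\le Ct^{-3/2}|w|_2$ since $r\sim t$ in $\Lambda_{T,R}$ and $\Omega^2 w$ contributes to $|w|_2$. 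For the first group, I would first split $F_{\W}=F^{\w}_{\W}+(F^{\kl}_{\W}+F^{\kkw}_{\W}+F^{\kww}_{\W})+F^{\h}_{\W}$. The terms other than $F^{\w}_{\W}$ all carry at least one factor of $v$ (for the cubic pieces other than $F^{\w}$) or are quartic; each such term is bounded by $C|(v,\pa u)|^2\bigl(|v|+|(v,\pa u)|^2\bigr)$, and after multiplying by $r^{1/2}\sim t^{1/2}$ we get exactly the last line of \eqref{eq;Hest} (with $|v|_1$ in place of $|v|$, which only gives more room).

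The heart of the matter is the cancellation term
\[
r^{1/2}F^{\w}_{\W}(\pa w)-\frac{1}{t}F^{\red}_{\W}(\omega,\op{W}),
\]
which must exhibit the gain $t^{-1/2}\jb{t-r}^{-2}[w]^3_1$ rather than merely $t^{1/2}|\pa w|^3$. Here I would use Lemma \ref{lem;profile}: writing $F^{\red}_{\W}(\omega,\op{W})=F^{\w}_{\W}\bigl((\hat\omega_a\op{W})_{0\le a\le2}\bigr)$ with $\op{W}=D_-(r^{1/2}w)$, and noting that $F^{\w}_{\W}$ is a homogeneous cubic form in $\pa w$, we have
\[
r^{1/2}F^{\w}_{\W}(\pa w)=F^{\w}_{\W}\bigl(r^{1/2}\pa w,\ (r^{1/2}\pa w)^{\otimes2}\bigr)\quad\text{schematically,}
\]
and more precisely $r^{3/2}F^{\w}_{\W}(\pa w)$ is trilinear in $r^{1/2}\pa w$. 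Substituting $r^{1/2}\pa w=\hat\omega D_-(r^{1/2}w)+E$ with $|E|\le C\jb{t+r}^{-1/2}[w]_1$ by Lemma \ref{lem;profile}, and using $\frac1t\sim\frac1r$ in $\Lambda_{T,R}$, the main terms cancel and the error is a sum of terms each containing at least one factor $E$ and two factors bounded by $C\jb{t-r}^{-1}[w]_1$ (from the decay $|r^{1/2}\pa w|\le C\jb{t-r}^{-1}[w]_1$, itself a consequence of the definition of $[\,\cdot\,]_1$ together with $r\sim t$). Collecting, this contributes $Cr^{-3/2}\cdot\jb{t+r}^{-1/2}[w]_1\cdot\jb{t-r}^{-2}[w]_1^2\cdot r^{1/2}\le Ct^{-1/2}\jb{t-r}^{-2}[w]^3_1$ after the overall $r^{1/2}$; one also has to account for the discrepancy between $\frac1t$ and $\frac1r$, which costs a harmless $|t-r|/(tr)$ and is absorbed the same way. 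This yields \eqref{eq;Hest}.

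For $H_\alpha$ the strategy is identical, with the extra term $t^{1/2}|\pa w|^3_{s-1}$ arising because $\Gamma^\alpha$ does not commute with the algebraic substitution defining $F^{\red}_{\W}$: when we apply $\Gamma^\alpha$ to $r^{1/2}F_{\W}(v,\pa u)$ and compare with $\frac1t\op{G}(\omega,\op{W})\op{W}^{(\alpha)}$, the principal part is $\Gamma^\alpha$ hitting one factor of $\pa w$ in the cubic form with the other two undifferentiated, which reproduces $\frac1t\op{G}\op{W}^{(\alpha)}$ up to the same profile errors as before; but the lower-order Leibniz terms, where derivatives are distributed among two or three factors, are genuinely of the form $t^{1/2}|\pa w|^3_{s-1}$ with no extra decay, and these cannot be cancelled — hence they appear explicitly on the right-hand side of \eqref{eq;Halphaest}. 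The remaining pieces ($\frac{1}{8r^{3/2}}(4\Omega^2+1)\Gamma^\alpha w$, and the $v$-involving and quartic parts of $\Gamma^\alpha F_{\W}$) are estimated exactly as for $H$, now with the norms $|\,\cdot\,|_{s+2}$, $|\,\cdot\,|_{s+1}$, $|(v,\pa u)|_s$ in place of $|\,\cdot\,|_2$, $|\,\cdot\,|_1$, using $|\Gamma^\alpha(\phi\psi)|\le C\sum_{|\beta|+|\gamma|\le s}|\Gamma^\beta\phi||\Gamma^\gamma\psi|$ and the comparability \eqref{eq;Cs} between $|\pa\phi|_s$ and $\sum_{|\alpha|\le s}|\pa\Gamma^\alpha\phi|$.

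The main obstacle is the bookkeeping in the cancellation term: one must organize the trilinear form so that \emph{every} term that is not cancelled carries either a factor of $E=r^{1/2}\pa w-\hat\omega D_-(r^{1/2}w)$ (worth $\jb{t+r}^{-1/2}[w]_1$) or a genuine derivative redistribution (the $|\pa w|^3_{s-1}$ term, in the $H_\alpha$ case), and simultaneously extract two factors of $\jb{t-r}^{-1}$ from the remaining inputs; keeping the powers of $t$ and $\jb{t-r}$ balanced while doing this — in particular handling the $\frac1t$ versus $\frac1r$ discrepancy and the curvature term $\frac{1}{r^{3/2}}(4\Omega^2+1)$ — is where care is needed, but each step is a routine consequence of Lemma \ref{lem;profile}, the definition of $[\,\cdot\,]_s$, and the equivalence of weights in $\Lambda_R$.
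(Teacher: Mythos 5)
Your decomposition of $H$ and $H_\alpha$ and the use of the homogeneity of $F^{\w}_{\W}$ together with Lemma \ref{lem;profile} is exactly the approach the paper takes: the paper splits $H = R_1 + R_2 + R_3 + R_4$ with $R_1 = -\tfrac{1}{2t}\bigl(F^{\w}_{\W}(r^{1/2}\pa w)-F^{\red}_{\W}(\omega,\op{W})\bigr)$, $R_2$ the $\tfrac{t-r}{2t}$-discrepancy, $R_3$ the curvature term, $R_4$ the non-$F^{\w}$ part, and for $H_\alpha$ introduces the extra remainder $R^\alpha_0$ (your Leibniz term $t^{1/2}|\pa w|^3_{s-1}$), so your proposal and the paper's proof coincide in structure. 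One slip in your weight bookkeeping: you write $|r^{1/2}\pa w| \le C\jb{t-r}^{-1}[w]_1$, but $[w]_1 = |w|_1 + \jb{t-r}|\pa w|$ only gives $|\pa w| \le \jb{t-r}^{-1}[w]_1$, so the correct bound in $\Lambda_{T,R}$ is $|r^{1/2}\pa w| \le Ct^{1/2}\jb{t-r}^{-1}[w]_1$ (and likewise $|\op{W}| \le Ct^{1/2}\jb{t-r}^{-1}[w]_1$); with this corrected, the factor $Ct^{-1}\cdot t^{-1/2}\cdot t\cdot\jb{t-r}^{-2}[w]^3_1 = Ct^{-1/2}\jb{t-r}^{-2}[w]^3_1$ comes out cleanly, whereas as written your factors multiply to $Ct^{-3/2}\jb{t-r}^{-2}[w]^3_1$, which you then silently bound by the stated right-hand side.
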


\begin{proof}
	Throughout this proof, we suppose that $(t,x)=(t,r\omega) \in \Lambda_{T,R}$.
	We start with the proof of \eqref{eq;Hest}.
	We have $r^{3/2}F^{\w}_{\W}(\pa w)=F^{\w}_{\W}(r^{1/2}\pa w)$ by the homogeneity of $F^{\w}_{\W}$.
	Hence $H$ can be written as 
	\begin{align*}
		H &= -\frac{1}{2t}\left( F^{\w}_{\W}(r^{1/2}\pa w)-F^{\red}_{\W}(\omega, \op{W}) \right)-r^{1/2}\frac{t-r}{2t}F^{\w}_{\W}(\pa w)\\
		&\qquad -\frac{1}{8r^{3/2}}(4\Omega^2+1)\omega-\frac{r^{1/2}}{2}\left(F_{\W}(v, \pa u)-F^{\w}_{\W}(\pa w) \right)\\
		&=: R_1+R_2+R_3+R_4.
	\end{align*}
	It is easy to see that
	\begin{align*}
		&|R_2| \le Ct^{-1/2}\jb{t-r}|\pa w|^3 \le Ct^{-1/2}\jb{t-r}^{-2}[w]^{3}_{1},\\
		&|R_3| \le Ct^{-3/2}|w|_2,
	\end{align*}
	since $t$ and $r$ are equivalent quantities in $\Lambda_{T,R}$.
	We also get
	\begin{align*}
		|R_4| &= \frac{r^{1/2}}{2} \ab{F^{\kl}_{\W}+F^{\kkw}_{\W}+F^{\kww}_{\W}+F^{\h}_{\W}} \\
		&\le Ct^{1/2} \left( \sum_{j=1}^{3}|(v, \pa v)|^{j}|\pa w|^{3-j}+|(v, \pa u)|^{4} \right) \\
		&\le Ct^{1/2} \bigl( |v|_{1}+|(v, \pa u)|^{2} \bigr)|(v, \pa u)|^2. 
	\end{align*}
	From the definition \eqref{eq;Wdfn} of $\op{W}$, we have
	\begin{align*}
		|\op{W}| \le r^{1/2}|D_- w|+\frac{|w|}{4r^{1/2}} \le Ct^{1/2}(|\pa w|+\jb{t+r}^{-1}|w|) \le Ct^{1/2}\jb{t-r}^{-1}[w]_1.
	\end{align*}
	It is easy to see that $r^{1/2}|\pa w| \le Ct^{1/2}\jb{t-r}^{-1}[w]_1$.
	Therefore it follows from Lemma \ref{lem;profile} that
	\begin{align}
		\notag
		|R_1| &\le Ct^{-1} \sum_{k,l,m}\sum_{a,b,c} \ab{(r^{1/2}\pa_aw_k)(r^{1/2}\pa_bw_l)(r^{1/2}\pa_cw_m)-(\omega_a\op{W}_k)(\omega_b\op{W}_l)(\omega_c\op{W}_m)} \\
		&\le Ct^{-1}(r^{1/2}|\pa w|+|\op{W}|)^2|r^{1/2}\pa w-\hat{\omega}\op{W}| \le Ct^{-1/2}\jb{t-r}^{-2}[w]^3_1.
	\end{align}

	Gathering the above estimates, we obtain \eqref{eq;Hest}.

	Next we turn to \eqref{eq;Halphaest} with $|\alpha|=s \ge 1$.
	Let $C^{abc}_{jklm}$ be from \eqref{eq;wred}.
	For functions $\phi=(\phi_k)$, $\psi=(\psi_k)$, $\eta=(\eta_k)$, with $k$ running from $N_0+1$ to $N$, we define $\op{C}_{\W}=(\op{C}_j)_{N_0+1 \le j \le N}$ by
	\begin{align*}
		\op{C}_j[\pa\phi, \pa\psi, \pa\eta]=\sum_{N_0+1 \le k \le l \le m \le N} \sum_{a,b,c=0}^{2}C^{abc}_{jklm}(\pa_a\phi_k)(\pa_b\psi_l)(\pa_c\eta_m),
	\end{align*}
	so that we have 
	\begin{align*}
		F^{\w}_{\W}(\pa w)=\op{C}_{\W}[\pa w, \pa w, \pa w].
	\end{align*}
	We also define $\op{C}^{\alpha}_{\W}(\pa w)=\bigl( \op{C}^{\alpha}_{j}(\pa w) \bigr)_{N_0+1 \le j \le N}$ by 
	\begin{align*}
		\op{C}^{\alpha}_{j}(\pa w)=\op{C}_{j}[\pa \Gamma^{\alpha}w, \pa w, \pa w]+\op{C}_{j}[\pa w, \pa\Gamma^{\alpha}w, \pa w]+\op{C}_{j}[\pa w, \pa w, \pa\Gamma^{\alpha}w]
	\end{align*}
	to write $H_\alpha$ as
	\begin{align*}
		H_\alpha &= -\frac{r^{1/2}}{2} \left( \Gamma^\alpha \left( F^{\w}_{\W}(\pa w) \right)-\op{C}^{\alpha}_{\W}(\pa w) \right)-\frac{1}{2t}\left(r^{3/2}\op{C}^{\alpha}_{\W}(\pa w)-\op{G}(\omega, \op{W})\op{W}^{(\alpha)}\right) \\
		&\quad -r^{1/2}\frac{t-r}{2t}\op{C}^{\alpha}_{\W}(\pa w)-\frac{1}{8r^{3/2}}(4\Omega^2+1)\Gamma^{\alpha}w \\
		&\quad -\frac{r^{1/2}}{2}\Gamma^{\alpha}\bigl(F_{\W}(v, \pa u)-F^{\w}_{\W}(\pa w) \bigr) =:R^{\alpha}_{0}+R^{\alpha}_{1}+R^{\alpha}_{2}+R^{\alpha}_{3}+R^{\alpha}_{4}.
	\end{align*} 
	Since we have $|\pa \Gamma^{\alpha}w-\Gamma^{\alpha}\pa w| \le C|\pa w|_{s-1}$, the Leibniz formula implies
	\begin{align*}
		|R^{\alpha}_{0}| \le Ct^{1/2}|\pa w|^{3}_{s-1}.
	\end{align*}
	If we use the notation 
	\begin{align*}
		\op{C}_{j}[\hat{\omega}X, \hat{\omega}Y, \hat{\omega}Z]=\sum_{N_0+1 \le k \le l \le m \le N} \sum^{2}_{a,b,c=0}C^{abc}_{jklm}(\omega_aX_k)(\omega_bY_l)(\omega_cZ_m)
	\end{align*}
	for $X,Y,Z \in \R^{N_1}$, we have $F^{\red}_{\W}(\omega, Y)=\op{C}_{\W}[\hat{\omega}Y, \hat{\omega}Y, \hat{\omega}Y]$.
	From this, we see that the $j$ th component of $\op{G}(\omega, \op{W})\op{W}^{(\alpha)}$ is 
	\begin{align*}
		\op{C}_{j}[\hat{\omega}\op{W}^{(\alpha)}, \hat{\omega}\op{W}, \hat{\omega}\op{W}]+\op{C}_{j}[\hat{\omega}\op{W}, \hat{\omega}\op{W}^{(\alpha)}, \hat{\omega}\op{W}]+\op{C}_{j}[\hat{\omega}\op{W}, \hat{\omega}\op{W}, \hat{\omega}\op{W}^{(\alpha)}].
	\end{align*}
	Therefore, recalling the definition of $\op{C}^{\alpha}_{\W}(\pa w)$ and going a similar way to the estimates of $R_1$, we obtain
	\begin{align*}
		|R^{\alpha}_{1}| &\le Ct^{-1}\bigl(r^{1/2}|\pa w|+|\op{W}| \bigr)^{2}|r^{1/2}\pa \Gamma^{\alpha}w-\hat{\omega}\op{W}^{(\alpha)}| \\
		&\quad +Ct^{-1}\bigl( r^{1/2}|\pa w|+|\op{W}|\bigr)\bigl(r^{1/2}|\pa \Gamma^{\alpha}w|+|\op{W}^{(\alpha)}|\bigr)|r^{1/2}\pa w-\hat{\omega}\op{W}| \\
		&\le Ct^{-1/2}\jb{t-r}^{-2}[w]^{3}_{s+1}. 
	\end{align*}
	Other three terms $R^{\alpha}_{2}$, $R^{\alpha}_{3}$, and $R^{\alpha}_{4}$ can be treated in the same way as $R_2$, $R_3$ and $R_4$, respectively.
	Therefore we obtain \eqref{eq;Halphaest} as desired.
\end{proof}


\section{Transformation for the Klein-Gordon components} \label{section;transform}
Among the nonlinear terms in $F_{\K}$ for the Klein-Gordon components, $F^{\w}_{\K}$ has the slowest decay.
To treat it in the decay estimate for the Klein-Gordon components, we use the transformation in Tsutsumi~\cite{Tsu}.
The idea of this transformation can go back to Kosecki~\cite{Kos}.
We would like to summarize the argument here.

For this purpose, we introduce
\begin{align}
               Q_{0}(\phi, \psi) := (\pa_{t}\phi)(\pa_{t}\psi)-(\nabla_{x}\phi) \cdot (\nabla_{x}\psi),
               \label{eq;Q0}
\end{align}
which is one of the \textit{null forms} introduced in \cite{Kla2} to characterize the null condition.
A key feature of the null forms is their faster decay.
Indeed, we have the following:

\begin{lem} \label{lem;null forms}
	For any non-negative integer $s$, there is a positive constant $C$ such that 
	\begin{align*}
		|Q_0(\zeta_j, \zeta_k)(t,x)|_s &\le C \jb{t+|x|}^{-1} \bigl([\zeta(t,x)]_{[s/2]+1}|\pa \zeta(t,x)|_s \\
		&\qquad \qquad \qquad \qquad \qquad +|\pa \zeta(t,x)|_{[s/2]}[\zeta(t,x)]_{s+1} \bigr)
	\end{align*} 
	for $t>0$, $x \in \R^2$, a smooth function $\zeta=(\zeta_j)_{1 \le j \le N}$ and $1 \le j,k \le N$.
\end{lem}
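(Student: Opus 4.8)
\textbf{Proof proposal for Lemma \ref{lem;null forms}.}

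The plan is to exploit the well-known algebraic identity that expresses the null form $Q_0$ in terms of the Klainerman vector fields $L_1, L_2, \Omega$ and the radial derivatives, so that at least one factor in each resulting product carries an extra decaying weight. Concretely, in two space dimensions one has a decomposition of the form
\begin{align*}
	Q_0(\phi, \psi)=\sum_{\text{finite}} c_{\text{term}} \, \frac{1}{\jb{t+r}}\bigl( (\Gamma\text{-type derivative of }\phi)(\pa\psi)+(\pa\phi)(\Gamma\text{-type derivative of }\psi) \bigr)+\text{(good remainder)},
\end{align*}
which follows by writing $\pa_t$ and $\pa_j$ in polar-type coordinates: using $\pa_j = \omega_j \pa_r - r^{-1}\omega_j^{\perp}\Omega$ and $t\pa_\pm$ expressible through $L_1,L_2$ and $(t-r)\pa_\mp$. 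The point is that in $Q_0(\phi,\psi)=(\pa_t\phi)(\pa_t\psi)-(\nabla\phi)\cdot(\nabla\psi)$ the leading ``bad'' directions $D_-\phi \cdot D_-\psi$ cancel, leaving only products where one factor is either a genuine $\Gamma$-derivative divided by $\jb{t+r}$, or carries a $\jb{t-r}/\jb{t+r}$ weight; both situations are exactly what is packaged in the bracket notation $[\,\cdot\,]_s = |\cdot|_s + \jb{t-r}|\pa\,\cdot\,|_{s-1}$ together with the prefactor $\jb{t+r}^{-1}$. I would first record this pointwise identity (for $s=0$), with the implicit understanding that the region $t>0$, $x\in\R^2$ is covered: near the light cone $\jb{t+r}$ and $\jb{t}$ are comparable, while away from it the weight $\jb{t-r}$ is itself comparable to $\jb{t+r}$, so the estimate degrades to the trivial bound $|Q_0|\le C|\pa\phi||\pa\psi|\le C\jb{t+r}^{-1}\jb{t-r}|\pa\phi||\pa\psi|$, which the right-hand side absorbs.

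Having the $s=0$ pointwise bound, I would then apply $\Gamma^\alpha$ with $|\alpha|\le s$ and use the commutation structure $[\Gamma,\Gamma]=\Gamma$, $[\Gamma,\pa]=\pa$ recalled in \eqref{eq;Cs}, together with the Leibniz rule. Each $\Gamma^\alpha$ distributes over the product decomposition above; the weight $\jb{t+r}^{-1}$ is essentially $\Gamma$-invariant (more precisely $|\Gamma^\alpha \jb{t+r}^{-1}|\le C\jb{t+r}^{-1}$), and $\Gamma^\alpha$ acting on $\jb{t-r}$ produces at worst $\jb{t-r}$ plus lower-order harmless terms. Splitting the $s$ derivatives between the two factors and using that the lower-order factor can always be taken with at most $[s/2]$ derivatives (there are at most $s$ derivatives total, so one factor gets $\le [s/2]$), one lands precisely on the claimed bound with the factor $[\zeta]_{[s/2]+1}|\pa\zeta|_s$ (the $+1$ accounting for the $\jb{t-r}|\pa\cdot|$ term hidden in the bracket) plus the symmetric term $|\pa\zeta|_{[s/2]}[\zeta]_{s+1}$.

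The main obstacle, and the step deserving the most care, is establishing the $s=0$ pointwise identity uniformly in $(t,x)$ — in particular verifying that the would-be worst interaction $(D_-\phi)(D_-\psi)$ genuinely cancels in $Q_0$ and that the remaining terms can each be written as $\jb{t+r}^{-1}$ times a product in which one factor lies in the bracket space and the other is a plain $|\pa\cdot|$ (or vice versa). This is a routine but slightly fiddly computation in the polar frame; one must be careful about the region $r\lesssim t$ or $r$ small where $1/r$ factors appear, handling it via the crude bound noted above rather than the sharp identity. Once $s=0$ is secured, propagating to general $s$ is purely mechanical via \eqref{eq;Cs} and Leibniz, and the bookkeeping of derivative counts to get $[s/2]$ and $[s/2]+1$ is straightforward.
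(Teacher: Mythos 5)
Your proposal is correct and takes essentially the same approach as the paper: reduce to $s=0$, split into the region away from the light cone (where $\jb{t+r}\lesssim\jb{t-r}$ and the crude bound $|Q_0|\le C|\pa\phi||\pa\psi|$ is absorbed by the right-hand side) and the region $1\le t/2\le r$ (where the $(D_-\phi)(D_-\psi)$ contribution cancels because $\omega_0^2-\omega_1^2-\omega_2^2=0$, and the error in replacing $\pa_a$ by $\omega_aD_-$ is controlled by $\jb{t+r}^{-1}[\,\cdot\,]_1$, i.e.\ \eqref{rmk;profile}). The only minor organizational difference: for general $s$ the paper invokes the clean algebraic fact that $\Gamma^\alpha Q_0(\zeta_j,\zeta_k)$ is a linear combination of $Q_0(\Gamma^\beta\zeta_j,\Gamma^\gamma\zeta_k)$ with $|\beta|+|\gamma|\le|\alpha|$ and then applies the $s=0$ bound to each term (distributing the derivative counts to obtain the $[s/2]$ split), whereas you propose distributing $\Gamma^\alpha$ over the polar-frame decomposition directly, which would require also tracking how $\Gamma$ acts on $\omega_a$, $r^{-1}$, $\jb{t+r}^{-1}$ — workable, but tidier to commute $\Gamma$ with $Q_0$ first.
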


\begin{proof}
	This estimate in three space dimensions was proved in \cite{Kata3}, and we only give an outline here.
	Because $\Gamma^{\alpha}Q_0(\zeta_j, \zeta_k)$ can be written as a linear combination of $Q_0(\Gamma^{\beta}\zeta_j, \Gamma^{\gamma}\zeta_k)$ with $|\beta|+|\gamma| \le |\alpha|$ (see \cite{Kata5} for instance), it suffices to prove
	\begin{align*}
		|Q_0(\phi, \psi)| \le \jb{t+r}^{-1}\bigl([\phi]_1|\pa \psi|+|\pa \phi|[\psi]_1\bigr).
	\end{align*}
	If $0 \le t \le 2$ or $r \le t/2$, then we have $\jb{t+r} \le C\jb{t-r}$.
	Hence we get
	\begin{align*}
		|Q_0(\phi, \psi)| \le C|\pa \phi||\pa\psi| \le C\jb{t+r}^{-1}\jb{t-r}|\pa\phi||\pa\psi| \le C\jb{t+r}^{-1}[\phi]_1|\pa \psi|.
	\end{align*}
	On the other hand, when $1 \le t/2 \le r$, \eqref{rmk;profile} implies
	\begin{align*}
		|(\pa_a \phi)(\pa_b \psi)-(\omega_a D_- \phi)(\omega_b D_- \psi)| \le C\jb{t+r}^{-1} \bigl([\phi]_1|\pa \psi|+|\pa \phi|[\psi]_1\bigr),
	\end{align*}
	which leads to the desired result for $Q_0$ because $(\omega_0^2-\omega_1^2-\omega_2^2)(D_- \phi)(D_- \psi)=0$.
\end{proof}

Now we return our attention to the transformation.
For $N_0+1 \le k,l,m \le N$, direct calculations yield
\begin{align*}
	&\Box \bigl( (\pa_aw_k)(\pa_bw_l)(\pa_cw_m) \bigr) \\
	&\quad =(\pa_a \Box w_k)(\pa_b w_l)(\pa_c w_m)+(\pa_a w_k)(\pa_b \Box w_l)(\pa_c w_m)+(\pa_a w_k)(\pa_b w_l)(\pa_c \Box w_m)\\
	&\qquad +2(\pa_aw_k)Q_0(\pa_bw_l, \pa_cw_m)+2(\pa_bw_l)Q_0(\pa_cw_m, \pa_aw_k)\\
	&\qquad +2(\pa_cw_m)Q_0(\pa_aw_k, \pa_bw_l).
\end{align*}
Using that $\pa_a \Box w_k=\pa_a \bigl(F_k(v, \pa u) \bigr)$, and also applying Lemma \ref{lem;null forms} to estimate $Q_0$, we obtain
\begin{align}
	\notag
	\ab{\Box \bigl( (\pa_aw_k)(\pa_bw_l)(\pa_cw_m) \bigr)}_s &\le C|(v, \pa u)|^{4}_{[(s+1)/2]} |(v, \pa u)|_{s+1} \\
	&\qquad +C\jb{t+r}^{-1}|\pa w|_{[s/2]}[w]_{s+1}|\pa w|_s
	\label{eq;dadada}
\end{align}
for non-negative integer $s$.

\begin{lem} \label{lem;vtilder}
	For $1 \le j \le N_0$, we put 
	\begin{align}
		\tl{v}_j:=v_j-m_j^{-2}F_j^{\w}(\pa w).
		\label{eq;vtl}
	\end{align}
	Then, for any non-negative integer $s$, we have 
	\begin{align*}
		\ab{(\Box+m_j^2)\tl{v}_j-\bigl(F_j(v, \pa u)-F_{j}^{\w}(\pa w) \bigr)}_s &\le C|(v, \pa u)|^4_{[(s+1)/2]}|(v, \pa u)|_{s+1} \\
		&\quad +C\jb{t+r}^{-1}|\pa w|_{[s/2]}[w]_{s+1}|\pa w|_{s}.
	\end{align*}
\end{lem}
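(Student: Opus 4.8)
The plan is to exploit an algebraic cancellation: $\tl v_j$ was designed precisely so that the slowly decaying cubic interaction $F_j^{\w}(\pa w)$ is removed from the Klein-Gordon source and replaced by $\Box\bigl(F_j^{\w}(\pa w)\bigr)$, which enjoys the extra decay coming from the null-form structure already encapsulated in \eqref{eq;dadada}.

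First I would compute directly from the definition \eqref{eq;vtl}. Since $m_j>0$ for $1 \le j \le N_0$, the quantity $m_j^{-2}$ is a fixed harmless constant, and
\[
	(\Box+m_j^2)\tl v_j = (\Box+m_j^2)v_j - m_j^{-2}(\Box+m_j^2)\bigl(F_j^{\w}(\pa w)\bigr).
\]
Using $(\Box+m_j^2)v_j = F_j(v,\pa u)$ from \eqref{eq;nkgw} and writing $(\Box+m_j^2)\bigl(F_j^{\w}(\pa w)\bigr)=\Box\bigl(F_j^{\w}(\pa w)\bigr)+m_j^2 F_j^{\w}(\pa w)$, the terms $F_j(v,\pa u)$ and $F_j^{\w}(\pa w)$ cancel against the right-hand side, leaving the exact identity
\[
	(\Box+m_j^2)\tl v_j - \bigl(F_j(v,\pa u)-F_j^{\w}(\pa w)\bigr) = -m_j^{-2}\,\Box\bigl(F_j^{\w}(\pa w)\bigr).
\]

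Second, recalling that $F_j^{\w}(\pa w)$ is a finite linear combination of monomials $C_{jklm}^{abc}(\pa_a w_k)(\pa_b w_l)(\pa_c w_m)$ with $N_0+1 \le k \le l \le m \le N$, I would apply the estimate \eqref{eq;dadada} to each such monomial and sum; the right-hand side of \eqref{eq;dadada} is monotone in the orders of all the seminorms, so it is stable under taking finite linear combinations, giving
\[
	\bigl|\Box\bigl(F_j^{\w}(\pa w)\bigr)\bigr|_s \le C|(v,\pa u)|^4_{[(s+1)/2]}|(v,\pa u)|_{s+1}+C\jb{t+r}^{-1}|\pa w|_{[s/2]}[w]_{s+1}|\pa w|_s.
\]
Absorbing the constant $m_j^{-2}$ (now $C$ may depend on $m_j$, which is harmless) and combining with the identity above yields the assertion.

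There is essentially no genuine obstacle here beyond bookkeeping: the substantial work—the gain of a factor $\jb{t+r}^{-1}$ via the null-form bound of Lemma \ref{lem;null forms}—was already carried out in deriving \eqref{eq;dadada}. The present lemma is the packaging of that fact into the statement that $\tl v_j$ solves a Klein-Gordon equation whose inhomogeneity no longer contains $F_j^{\w}(\pa w)$, so that Lemma \ref{lem;v} will later be applicable to $\tl v_j$ with an acceptable decay rate. The only point requiring a little care is to note that $v_j$ and $\tl v_j$ differ by a cubic term in $\pa w$, so $\tl v_j$ and $v_j$ are interchangeable in the nonlinear estimates up to higher-order corrections; this will be used when transferring bounds back to $v_j$.
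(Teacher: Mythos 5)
Your proposal is correct and follows exactly the same route as the paper: compute $(\Box+m_j^2)\tl v_j$ directly from \eqref{eq;vtl} and \eqref{eq;nkgw} to isolate $-m_j^{-2}\Box\bigl(F_j^{\w}(\pa w)\bigr)$, then invoke \eqref{eq;dadada}. The paper presents this in one sentence; your extra bookkeeping is accurate and adds nothing new.
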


\begin{proof}
	As the definition of $\tl{v}_j$ yields
	\begin{align*}
		(\Box+m_j^2)\tl{v}_j=\bigl( F_j(v, \pa u)-F^{\w}_{j}(\pa w) \bigr)-m_j^{-2}\Box \bigl( F_j^{\w} \bigr),
	\end{align*}
	this lemma is an immediate consequence of \eqref{eq;dadada}.
\end{proof}


\section{Proof of the global existence} \label{section;sdge} 
Finally, we are in a position to prove Theorem \ref{thm;main}.
For a smooth solution $u=(v, w)$ to the Cauchy problem \eqref{eq;nkgw}-\eqref{eq;id} on $[0, T) \times \R^2$, we define
\begin{align}
	\begin{array}{ll}
		\op{E}[u](T) &:= \displaystyle \sup_{(t,x) \in [0,T) \times \R^2} \Bigl(\jb{t+r}|v(t,x)|_{I+1} \\
		&\qquad \qquad \qquad  \qquad \ +\jb{r}^{1/2}\jb{t-r}^{1-\rho}|\pa w(t,x)| \\
		&\qquad \qquad \qquad  \qquad \  +\jb{t+r}^{-\kappa}\jb{r}^{1/2}\jb{t-r}^{1-\rho}|\pa w(t,x)|_{I} \Bigr),
	\end{array} 
\label{eq;apriori}
\end{align}
where $\rho$ and $\kappa$ are small positive constants to be fixed later, and $I$ is a fixed positive integer with $I \ge 9$.
$\kappa$ is assumed to be so small compared to $\rho$ that we have $8\kappa < \rho$ at least.

For the proof of Theorem \ref{thm;main}, it suffices to show the following: 
If we choose appropriate $\rho$ and $\kappa$, then for any large number $M$, there is a positive number $\eps_0=\eps_0(M)$, being independent of $T$, such that $\op{E}[u](T) \le M\eps$ implies $\op{E}[u](T) \le M\eps/2$ for $\eps \in (0, \eps_0]$.
Indeed, this property and the bootstrap argument lead to \textit{a priori} estimates of $u$ for sufficiently small $\eps$, and we have the global existence.

We assume $\op{E}[u](T) \le M\eps$ from now on.
In the following arguments, $M (\ge 1)$ is sufficiently large, and $\eps$ is supposed to be so small that $M\eps \le M^2 \eps \le 1$.
Since $f,g \in C^{\infty}_{0}$, we may assume \eqref{eq;idfinite} with some $R > 0$, and consequently we have \eqref{eq;finite}.

Throughout this proof, $C$ denotes a positive constant which is independent of $M$, $\eps$ and $T$.

Recall the definition $W_-(t,r)=\min\{\jb{r}, \jb{t-r}\}$.
Then we have 
\begin{align}
	\label{eq;vdu}
	\ab{\bigl(v(t,x), \pa u(t,x)\bigr)} &\le CM\eps\jb{t+r}^{-1/2}W_-(t,r)^{-1/2}, \\
	\label{eq;vduI}
	\ab{\bigl(v(t,x), \pa u(t,x)\bigr)}_{I} &\le CM\eps\jb{t+r}^{\kappa-1/2}W_-(t,r)^{-(1/2)-\kappa}
\end{align}
for $(t,x) \in [0,T) \times \R^2$, since there is a positive constant $C$ such that
\begin{align*}
	C^{-1}\jb{t+r}W_-(t, r) \le \jb{r}\jb{t-r} \le C\jb{t+r}W_-(t, r)
\end{align*} 
for $(t,x) \in [0,T) \times \R^2$.

{\bf Step 1: \ Energy estimates for wave and Klein-Gordon components.}
Recall the definitions of $|\phi(t,x)|_s$ and $\| \phi(t)\|_s$ for a smooth function $\phi$ and a non-negative integer $s$.
For simplicity of exposition, we also set $|\phi(t,x)|_{-1}$ and $\| \phi(t) \|_{-1}$ to be zero in the sequel.
Let $0 \le l \le 2I+1$.
By the Leibniz formula, \eqref{eq;vdu} and \eqref{eq;vduI}, we obtain
\begin{align}
	\notag
	|F(v, \pa u)|_l &\le C\Bigl(|(v, \pa u)|^2|(v, \pa u)|_l+|(v, \pa u)|_{[l/2]}^2|(v, \pa u)|_{l-1}\Bigr) \\
	\notag
	&\quad +C|(v, \pa u)|_{[l/2]}^3|(v, \pa u)|_{l} \\
	&\le CM^2\eps^2 \left((1+t)^{-1}|(v, \pa u)|_l+(1+t)^{2\kappa-1}|(v, \pa u)|_{l-1} \right),
	\label{eq;Flest}
\end{align}
since we have $[l/2] \le I$ for $0 \le l \le 2I+1$.
Because of \eqref{eq;Cs}, applying the standard energy inequality for wave and Klein-Gordon equations to \eqref{eq;comm}, we get
\begin{align} 
	\notag
	\|(v, \pa u)(t)\|_l &\le C\left( \|(v,\pa u)(0)\|_l+ \int^{t}_{0}\left\| F \bigl((v, \pa u)(\tau)\bigr)\right\|_l \, d\tau \right) \\
	\notag
	&\le C\eps +CM^2\eps^2\int^{t}_{0}(1+\tau)^{-1}\|(v, \pa u) \|_l \, d\tau \\
	&\quad +CM^2\eps^2\int^{t}_{0}(1+\tau)^{2\kappa-1}\|(v, \pa u)(\tau)\|_{l-1} \, d\tau,
	\label{eq;sdene}
\end{align}
where the constant $C$ can be chosen independently of $l$.

It follows from \eqref{eq;sdene} with $l=0$ and the Gronwall lemma that
\begin{align*}
	\|(v, \pa u)(t)\|_0 \le C\eps(1+t)^{CM^2\eps^2}.
\end{align*}
Similarly, applying the Gronwall lemma to \eqref{eq;sdene}, we can inductively show
\begin{align}
	\|(v, \pa u)(t)\|_l \le C\eps(1+t)^{CM^2\eps^2+2l\kappa}
	\label{eq;ene}
\end{align}
for $0\le l \le 2I+1$.
Especially, we have 
\begin{align}
	\jb{t}^{2\kappa}\|(v, \pa u)(t)\|_{2I}+\|(v, \pa u)(t)\|_{2I+1} \le C\eps\jb{t}^{\theta},
	\label{eq;t2k}
\end{align}
where
\begin{align*}
	\theta:=C_{\ast}M^2\eps^2+2(2I+1)\kappa \le C_{\ast}\eps+2(2I+1)\kappa.
\end{align*}
Here, we write $C_{\ast}$ for the specific positive constant $C$ appeared in \eqref{eq;ene}.
Given $\rho$, we choose small $\kappa$ and $\eps_1$ such that we have $\theta \le \rho/2$ for $0 < \eps \le \eps_1$.
Then \eqref{eq;t2k} can be written as 
\begin{align}
	\jb{t}^{2\kappa}\|(v, \pa u)(t)\|_{2I}+\|(v, \pa u)(t)\|_{2I+1} \le C\eps\jb{t}^{\rho/2}
	\label{eq;Klasob}
\end{align}
for small $\eps$.

{\bf Step 2: \ Rough decay estimates for wave and Klein-Gordon components.}
By \eqref{eq;vduI} and \eqref{eq;Klasob}, we obtain
\begin{align*} 
	||F(v, \pa u)(t)||_{2I+1} \le CM^2\eps^2\jb{t}^{(\rho/2) -1}.
\end{align*}  
It follows from Lemma \ref{lem;v} that
\begin{align}
	\jb{t+r}|v(t,x)|_{2I-3} \le C\eps+CM^2\eps^2\jb{t}^{\rho/2} \le C\eps\jb{t}^{\rho/2}.
	\label{eq;vest1}
\end{align}

By \eqref{eq;vduI} and \eqref{eq;Klasob}, together with Lemma \ref{lem;xphix}, we get
\begin{align}
	\notag
	\jb{r}^{1/2}|F_{\W}(v, \pa u)|_{2I-1} &\le C\jb{r}^{1/2}|(v, \pa u)|^2_I|(v, \pa u)|_{2I-1} \\
	\notag
	&\le C|(v, \pa u)|^2_I \|(v, \pa u)\|_{2I+1} \\
	\notag
	&\le C M^2 \eps^3 \jb{t+r}^{(\rho/2)+2\kappa-1} W_-(t, r)^{-2\kappa-1} \\
	\label{eq;rFW}
	&\le C M^2 \eps^3 \jb{t+r}^{\rho-2\kappa-1} W_-(t, r)^{-2\kappa-1},
\end{align}
since $8\kappa < \rho$.
As \eqref{eq;rFW} implies
\begin{align*}
	\sum^1_{s=0} \sum_{|\alpha| \le 2I-1-s} \op{B}_{2\kappa-\rho, 2\kappa, s}[\Gamma^{\alpha}F_{\W}](t,x) \le CM^2\eps^3 \le C\eps,
\end{align*}
it follows from Lemma \ref{lem;hom} and Lemma \ref{lem;inhom} with $\xi=\rho$ and $\zeta=\eta=\kappa$ that 
\begin{align}
	\label{eq;w1}
	|w(t,x)|_{2I-1} &\le C\eps\jb{t+r}^{\rho-(1/2)}\jb{t-r}^{-\kappa}, \\
	\label{eq;dw1}
	|\pa w(t,x)|_{2I-2} &\le C\eps\jb{t+r}^{\rho}\jb{r}^{-1/2}\jb{t-r}^{-1-\kappa}.
\end{align}

Note that \eqref{eq;vest1} and \eqref{eq;dw1} yield
\begin{align}
	|(v, \pa u)(t,x)|_{2I-4} \le C\eps\jb{t+r}^{\rho}\jb{r}^{-1/2}\jb{t-r}^{-1/2-\kappa},
	\label{eq;vdu2I-4}
\end{align}
since $\jb{t+r}^{-(1/2)-(\rho/2)} \le C\jb{t-r}^{-(1/2)-\kappa}$.

{\bf Step 3: \ Better decay estimates for the wave components.}
Recall the definition \eqref{dfn;light cone} of $\Lambda_{T,R}$.
We set $\Lambda^{\rm c}_{T,R}=\bigl( [0,T) \times \R^2 \bigr) \backslash \Lambda_{T,R}$.
If $r>t+R$, we have $\pa w(t,x)=0$.
On the other hand, when $t<2$ or $t > 2r$, we have $\jb{t+r} \le C\jb{t-r}$ with some universal positive constant $C$.
Therefore \eqref{eq;dw1} leads to 
\begin{align*}
	|\pa w(t,x)|_{2I-2} \le C\eps\jb{r}^{-1/2}\jb{t-r}^{(\rho-\kappa)-1}, \ \ (t,x) \in \Lambda^{\rm c}_{T,R}.
\end{align*}
Therefore we get 
\begin{align}
	\sup_{(t,x) \in \Lambda^{\rm c}_{T,R}} \jb{r}^{1/2}\jb{t-r}^{1-\rho}|\pa w(t,x)|_{2I-2} \le C\eps.
	\label{eq;fwestOUT}
\end{align}

Hence our task is to estimate $\pa w$ in $\Lambda_{T,R}$.
Let $(t,x) \in \Lambda_{T,R}$ in the rest of this step.
Recall the definitions of $\op{W}$, $\op{W}^{(\alpha)}$, $H$ and $H_{\alpha}$, as well as $\op{W}^{\ast}_{0}$ and $\op{W}^{(\alpha)\ast}_{0}$ in Section \ref{section;profile}.

Let $|\alpha|=s \le 2I-4$.
We start with the estimate for $\op{W}^{(\alpha)\ast}_0$, which includes that for $\op{W}^{\ast}_{0}$ as the special case of $|\alpha|=0$.
By \eqref{eq;w1} and \eqref{eq;dw1}, we get
\begin{align*}
	|\op{W}^{(\alpha)}(t,x)| \le C\eps t^{\rho}\jb{t-r}^{-1-\kappa}.
\end{align*}
Recalling that $t_0(\sigma)$ is equivalent to $\jb{\sigma}$ for $\sigma=r-t$ in $\Lambda_{T,R}$ (see \eqref{eq;t0sigma}), we obtain
\begin{align}
	|\op{W}^{(\alpha)\ast}_{0}(\sigma, \omega)| \le C\eps \jb{\sigma}^{\rho-1-\kappa} \le C\eps \jb{\sigma}^{\rho-1}.
	\label{eq;Walphaest1}
\end{align}
It follows from \eqref{eq;w1} and \eqref{eq;dw1} that
\begin{align}
	[w(t,x)]_{2I-1} =|w(t,x)|_{2I-1}+\jb{t-r}|\pa w(t,x)|_{2I-2} \le C\eps t^{\rho-(1/2)}\jb{t-r}^{-\kappa}.	
	\label{eq;sharp1}
\end{align}

We use Lemma \ref{lem;Hest} to estimate $H$ and $H_\alpha$.
It follows from \eqref{eq;sharp1}, \eqref{eq;w1}, \eqref{eq;vduI}, \eqref{eq;vest1} and \eqref{eq;vdu2I-4} (in the order of application) that
\begin{align}
	\notag
	|H| &\le C\eps^3 t^{3\rho-2}\jb{t-r}^{-2-3\kappa}+C\eps t^{\rho-2}\jb{t-r}^{-\kappa} \\
	\notag
	&\quad +CM^2\eps^3 t^{2\kappa-(1/2)}\jb{t-r}^{-1-2\kappa} \bigl(t^{(\rho/2)-1}+t^{2\rho-1}\jb{t-r}^{-1-2\kappa}\bigr) \\
	\label{eq;Hest1}
	&\le C\eps t^{3\rho-(3/2)} \jb{t-r}^{-2\rho-\kappa-(1/2)},
\end{align} 
provided that $\rho$ and $\kappa$ are chosen to be sufficiently small.
Just in the same manner, we get
\begin{align}
	|H_\alpha| \le C t^{1/2}|\pa w|^{3}_{s-1}+C\eps t^{3\rho-(3/2)} \jb{t-r}^{-2\rho-\kappa-(1/2)}.
	\label{eq;Halphaest1}
\end{align}

Using \eqref{eq;Walphaest1}, \eqref{eq;sharp1} and \eqref{eq;Hest1}, we obtain from Lemma \ref{lem;rdw} that
\begin{align*}
	\jb{r}^{1/2}|\pa w(t,x)| &\le C\eps\jb{t-r}^{\rho-1}+C\eps\jb{t-r}^{-2\rho-\kappa-(1/2)}\int^{\infty}_{t_0(\sigma)}\tau^{3\rho-(3/2)} \ d\tau \\
	&\quad +C\eps t^{\rho-1}\jb{t-r}^{-\kappa} \le C\eps\jb{t-r}^{\rho-1}.
\end{align*}
In other words, we have proved
\begin{align}
	\sup_{(t,x) \in \Lambda_{T,R}} \jb{r}^{1/2}\jb{t-r}^{1-\rho}|\pa w(t,x)| \le C\eps.
	\label{eq;fdwest}
\end{align}

From \eqref{eq;w1} and \eqref{eq;fdwest}, we get $|\op{W}(t,x)| \le C\eps$, which implies
\begin{align*}
 \no{\op{G} \left( \omega, \op{W}(t,x) \right)}_{\Op} \le 2C_1 \eps^2
\end{align*}
with some positive constant $C_1$.
Lemma \ref{lem;dGw} together with \eqref{eq;Walphaest1}, \eqref{eq;sharp1} and \eqref{eq;Halphaest1} implies
\begin{align*}
	&\jb{r}^{1/2}|\pa w(t,x)|_s\\
	&\qquad \le C\eps t^{C_1\eps^2}\jb{\sigma}^{\rho-1}+C\eps t^{C_1\eps^2}\jb{\sigma}^{-2\rho-\kappa-(1/2)}\int^{\infty}_{t_0(\sigma)}\tau^{3\rho-(3/2)-C_1\eps^2} \ d\tau \\
	&\qquad \quad +Ct^{C_1\eps^2}\int^{t}_{t_{0}(\sigma)}\tau^{(1/2)-C_1\eps^2}\ab{\pa u \left(\tau, (\sigma+\tau)\omega \right)}^{3}_{s-1} \ d\tau+C\eps t^{\rho-1}\jb{\sigma}^{-\kappa} \\
	&\qquad \le C\eps t^{C_1\eps^2}\jb{\sigma}^{\rho-1} \\
	&\qquad \quad +Ct^{C_1\eps^2}\int^{t}_{t_0(\sigma)}\tau^{-C_1\eps^2-1} \left( \jb{\sigma+\tau}^{1/2}\ab{\pa u \left( \tau, (\sigma+\tau)\omega \right)}_{s-1}\right)^3 \ d\tau,
\end{align*}
where $\sigma=r-t$.
Using \eqref{eq;fdwest}, we obtain
\begin{align*}
	\jb{r}^{1/2}|\pa w(t,x)|_1 &\le C\eps t^{C_1\eps^2}\jb{\sigma}^{\rho-1}+C\eps^3 t^{C_1\eps^2}\jb{\sigma}^{3(\rho-1)}\int^{t}_{t_0(\sigma)}\tau^{-C_1\eps^2-1} \ d\tau\\
	&\le C\eps t^{C_1\eps^2}\jb{\sigma}^{\rho-1}.
\end{align*}
Similarly, by induction, we obtain
\begin{align*}
	\jb{r}^{1/2}|\pa w(t, x)|_s \le C\eps t^{3^{s-1} C_1\eps^2}\jb{\sigma}^{\rho-1}
\end{align*}
for $1 \le I \le 2I-4$.
If $\eps$ is sufficiently small to satisfy
\begin{align*}
	3^{2I-5}C_1\eps^2 \le \kappa,
\end{align*}
we obtain
\begin{align}
	\sup_{(t,x) \in \Lambda_{T,R}} t^{-\kappa}\jb{r}^{1/2}\jb{t-r}^{1-\rho}|\pa w(t,x)|_{2I-4} \le C\eps.
	\label{eq;fdwpro}
\end{align}

{\bf Step 4: \ Better decay estimates for the Klein-Gordon components.}
We will make use of Lemma \ref{lem;vtilder} to improve decay estimates for the Klein-Gordon components.
Let $(t,x) \in [0,T) \times \R^2$.
For $1 \le j \le N_0$, we put
\begin{align*}
	\tl{v}_j=v_j-m^{-2}_{j}F^{\w}_{j}(\pa w).
\end{align*}

From \eqref{eq;vduI}, \eqref{eq;Klasob}, \eqref{eq;vest1} and \eqref{eq;vdu2I-4}, we get
\begin{align*}
	\no{F^{\kl}_{\K}}_{2I-4} &\le C\no{|v|_{I-1}}^{2}_{L^{\infty}} \no{v}_{2I-3} \le CM^{2}\eps^{3}\jb{t}^{(\rho/2)-2}, \\
	\no{F^{\kkw}_{\K}+F^{\kww}_{\K}}_{2I-4} &\le C\no{|v|_{2I-3}|(v, \pa u)|_{2I-4}}_{L^{\infty}}\no{\pa w}_{2I-4} \\
	&\le C\eps^3\jb{t}^{2\rho-(3/2)}, \\
	\no{F^{\h}_{\K}}_{2I-4} &\le C\no{|(v, \pa u)|_{I-2}}^{3}_{L^{\infty}} \no{(v, \pa u)}_{2I-4} \\
	&\le CM^{3}\eps^{4}\jb{t}^{(\rho/2)+3\kappa-(3/2)}.
\end{align*}
Similarly to the last line, we also have
\begin{align*}
	\no{|(v, \pa u)|^{4}_{[(2I-3)/2]}|(v, \pa u)|_{2I-3}}_{L^{2}} \le CM^4 \eps^{5}\jb{t}^{(\rho/2)+4\kappa-2}.
\end{align*}

By \eqref{eq;w1} and \eqref{eq;dw1}, we get
\begin{align*}
	[w(t,x)]_{2I-3} \le C\eps\jb{t+r}^{\rho}\jb{r}^{-1/2}\jb{t-r}^{-\kappa} \le C\eps\jb{t+r}^{\rho-\kappa}W_{-}(t,r)^{-1/2}.
\end{align*}
Accordingly we obtain
\begin{align*}
	&\no{\jb{t+|\cdot|}^{-1} |\pa w|_{I-2}[w]_{2I-3}|\pa w|_{2I-4}}_{L^{2}} \\
	&\quad \le C\no{\jb{t+|\cdot|}^{-1}|\pa w|_{I-2}[w]_{2I-3}}_{L^{\infty}}\no{\pa w}_{2I-4} \\
	&\quad \le CM\eps^3 \jb{t}^{(3\rho/2)-(3/2)}.
\end{align*}
Gathering the above estimates, we see from Lemma \ref{lem;vtilder} that
\begin{align}
	\no{(\Box+m^{2}_{j})\tl{v}_{j}}_{2I-4} \le CM^2\eps^3 \jb{t}^{-1-\kappa} \le C\eps\jb{t}^{-1-\kappa}.
	\label{eq;vtilder}
\end{align}
Lemma \ref{lem;v} implies
\begin{align*}
	\jb{t+r}|\tl{v}(t,x)|_{2I-8} \le C\eps.
\end{align*}
By \eqref{eq;vduI} and \eqref{eq;dw1}, we have
\begin{align*}
	\ab{v(t,x)-\tl{v}(t,x)}_{2I-8} &\le C\left|F^{\w}_{\K}(\pa w)(t,x)\right| \le CM^2\eps^3\jb{t+r}^{2\kappa+\rho-(3/2)} \\
	&\le C\eps\jb{t+r}^{-1}.
\end{align*}
Therefore we obtain 
\begin{align}
	\jb{t+r}|v(t,x)|_{2I-8} \le C\eps.
	\label{eq;fvest} 
\end{align}

{\bf The final step.}
Because $I+1 \le 2I-8$ and $I \le 2I-4$ for $I \ge 9$, from \eqref{eq;fwestOUT}, \eqref{eq;fdwest}, \eqref{eq;fdwpro} and \eqref{eq;fvest}, we find that there is a positive constant $C_0$ and $\eps_0=\eps_0(M)$ such that we have
\begin{align*}
	\op{E}[u](T) \le C_0\eps
\end{align*}
for $\eps \in (0, \eps_0]$.
If $M$ is sufficiently large to satisfy $M \ge 2C_0$, then we have $\op{E}[u](T) \le M\eps/2$ for $\eps \in (0,\eps_0]$, as desired.
This completes the proof.


\section{Remarks on the asymptotic behavior of global solutions} \label{section;AB}

In this section, we briefly discuss the asymptotic behavior of global solutions.
Note that all the estimates in the previous section hold with $T=\infty$.

Firstly, we consider the Klein-Gordon components $v=(v_j)_{1 \le j \le N_0}$.
We say that $v$ is \textit{asymptotically free}, if there is $(\varphi^{+}_{j}, \psi^{+}_{j}) \in H^1(\R^2) \times L^2(\R^2)$ for $1 \le j \le N_0$ such that 
\begin{align*}
	\lim_{t \rightarrow \infty} \left( \no{v(t)-\vp(t)}_{H^1(\R^2)}+\no{\pa_t v(t)-\pa_t \vp(t)}_{L^2(\R^2)} \right)=0,
\end{align*} 
where $\vp=(\vp_j)_{1 \le j \le N_0}$ satisfies $(\Box+m^2_j)\vp_j=0$ with $(\vp_j, \pa_t \vp_j)(0)=(\varphi^+_j, \psi^+_j)$ for $1 \le j \le N_0$.
We refer to $(\varphi^{+}, \psi^{+})=\bigl( (\varphi^+_j), (\psi^+_j) \bigr)$ as the \textit{asymptotic data}.

By the standard argument, \eqref{eq;vtilder} shows that $\tl{v}$ is asymptotically free.
Since \eqref{eq;vduI} and \eqref{eq;Klasob} leads to 
\begin{align*}
	\no{v(t)-\tl{v}(t)}_1 \le C \no{F^{\w}_{\K} \bigl( \pa w(t) \bigr)}_1 \le C\eps^3 \jb{t}^{2\kappa+(\rho/2)-1} \rightarrow 0, \ \ t \rightarrow \infty, 
\end{align*}
we see that $v$ also is asymptotically free.
Moreover, the asymptotic data $(\varphi^+, \psi^+)$ satisfies $(\varphi^+, \psi^+)= \eps(f, g)+\op{O}(\eps^3)$ as $\eps \rightarrow +0$ in $H^1 \times L^2$.
In other words, the asymptotic data is close to the original data for small $\eps$.

Secondly, let us consider the wave components $w=(w_j)_{N_0+1 \le j \le N}$.
For systems of semilinear wave equations (namely for the case $N_0=0$), there is a wide variety of asymptotic behavior for global solutions under the KMS condition, depending on the nonlinearity: some solutions are asymptotically free, and others are not.
Even if the solution is asymptotically free, the asymptotic data can be away from the original data, and decay of the energy may also occur for some nonlinearity.
For these results, see \cite{Kata5,KMatsuS}, Katayama-Murotani-Sunagawa~\cite{KMuroS}, Nishii-Sunagawa~\cite{NS}, Nishii-Sunagawa-Terashita~\cite{NST}.
In these works, the main tool to obtain the asymptotic behavior is the profile system \eqref{eq;profile}.

For our system, we obtain from \eqref{eq;Hest1} that 
\begin{align}
	\pa_+ \op{W}(t,x) =-\frac{1}{2t}F^{\red}_{\W}\bigl(\omega, \op{W}(t,x)\bigr)+\op{O}\bigl(t^{3\rho-(3/2)}\jb{t-r}^{-2\rho-\kappa-(1/2)}\bigr)
	\label{eq;profileab}
\end{align}
for $(t,x) \in \Lambda_{R}$.
By the a \textit{priori} estimate, we also have
\begin{align}
	|\pa w(t,x)| \le C\eps\jb{r}^{-1/2}\jb{t-r}^{\rho-1}, \ \ \ (t,x) \in [0, \infty) \times \R^2.
	\label{eq;westab}
\end{align}
Using these estimates, we can recover most of the asymptotic behavior results in the above works with some non-essential modification.
Therefore, the asymptotic behavior of $w$ is governed only by $F^{\w}_{\W}$ through $F^{\red}_{\W}$, and other parts of the nonlinearity do not affect the asymptotic behavior of $w$.

To illustrate it, let us consider the system \eqref{eq;ex} with $a=0$ and $c=1$ in Example \ref{ex}.
For single wave equation $\Box w=-(\pa_t w)^3$, it is known that the energy decay occurs (see \cite{Kata5,KMatsuS,KMuroS}).
We show that the same is true for the wave component $w$ in \eqref{eq;ex}.
Since $F^{\red}_{\W}(\omega, Y)=Y^3$, we can apply Lemma 10.25 in \cite{Kata5} to our profile system \eqref{eq;profileab}, and we see that there is $A_0=A_0(\sigma, \omega)$ such that
\begin{align*}
	\op{W}(t,x)=A(t,r-t,\omega)+\op{O}\bigl(\eps t^{4\rho-(1/2)}\jb{t-r}^{-3\rho-\kappa-(1/2)}\bigr)
\end{align*}
for $(t,x) \in \Lambda_{R}$ and $|A_0(\sigma, \omega)| \le C\eps\jb{\sigma}^{-\rho-\kappa-1}$, where $A=A(t, \sigma, \omega)$ is a solution to $\pa_t A=-A^3/(2t)$ with $A(1, \sigma, \omega)=A_0(\sigma, \omega)$.
It follows from Lemma \ref{lem;profile} and \eqref{eq;sharp1} that
\begin{align*}
	r^{1/2}\pa w(t,x)=\hat{\omega}A(t, r-t, \omega)+\op{O}\bigl(\eps t^{4\rho-(1/2)}\jb{t-r}^{-3\rho-\kappa-(1/2)}\bigr), \ (t,x) \in \Lambda_{R}.
\end{align*}
$A$ can be solved explicitly (see \cite{Kata5} for example), and we can show
\begin{align*}
	|A(t, \sigma, \omega)| \le C(\log t)^{-1/2},
\end{align*}
which, together with \eqref{eq;westab}, yields
\begin{align*}
	r^{1/2}|\pa w(t,x)| \le C\eps \min\{(\eps^2\log t)^{-1/2}, \jb{t-r}^{\rho-1}\}.
\end{align*}
In this way, the enhanced decay estimate for $w$ in \cite{Kata5} is recovered, and we can follow the proof of Theorem 10.26 in \cite{Kata5} to obtain
\begin{align*}
	\no{\pa w(t)}_{L^2} \le \frac{C\eps}{(\eps^2\log t)^{(1-\mu)/4}} \rightarrow 0, \ \ t \rightarrow \infty
\end{align*}
with $\mu=\rho/(1-\rho)$.

\ \\
\ \\


\begin{thebibliography}{}
\bibitem{Aig}K.~Aiguchi,
{\em Master's thesis,}
Osaka University,
2017, in Japanese.

\bibitem{Christ}D.~Christodoulou,
{\em Global solutions of nonlinear hyperbolic equations for small initial data,}
Comm. Pure Appl. Math.
{\bf 39} (1986), 267-282.

\bibitem{Don1}S.~Dong,
{\em Global solution to the wave and Klein-Gordon system under null condition in dimension two,}
arXiv:2005.04767.

\bibitem{Don2}S.~Dong,
{\em Asymptotic behavior of the solution to the Klein-Gordon-Zakharov model in dimension two,}
Commun. Math. Pys.
{\bf 384} (2021), 587-607.

\bibitem{DM}S.~Duan and Y.~Ma,
{\em Global solutions of wave-Klein-Gordon system in two spatial dimensions with strong couplings in divergence form,}
arXiv:2010.08951.

\bibitem{DW}S.~Dong and Z.~Wyatt,
{\em Two dimensional wave--Klein-Gordon equations with semilinear nonlinearities,}
arXiv:2011.11990v1.

\bibitem{God}P.~Godin,
{\em Lifespan of solutions of semilinear wave equations in two space dimensions,}
Comm. Partial Differ. Equ.
{\bf 18} (1993), 895-916.

\bibitem{Geo1}V.~Georgiev,
{\em Global solution of the system of wave and Klein-Gordon equations,}
Math. Z.
{\bf 203} (1990), 683-698.

\bibitem{Geo2}V.~Georgiev,
{\em Decay estimates for the Klein-Gordon equations,}
Comm. Partial Differential Equations
{\bf 17} (1992), 1111-1139.

\bibitem{H}A.~Hoshiga,
{\em The initial value problems for quasi-linear wave equations in two space dimensions with small data,}
Adv. Math. Sci. Appl.
{\bf 5} (1995), 67-89.

\bibitem{HK}A.~Hoshiga and H.~Kubo,
{\em Global solvability for systems of nonlinear wave equations with multiple speeds in two space dimensions,}
Differ. Integral Equ.
{\bf 17} (2004), 593-622.

\bibitem{Kata1}S.~Katayama,
{\em Global existence for systems of nonlinear wave equations in two space dimensions,}
Publ. RIMS, Kyoto Univ.
{\bf 29} (1993), 1021-1041.

\bibitem{Kata2}S.~Katayama,
{\em Global existence for systems of nonlinear wave equations in two space dimensions, II,}
Publ. RIMS, Kyoto Univ.
{\bf 31} (1995), 645-665.

\bibitem{Kata3}S.~Katayama,
{\em Global existence for coupled systems of nonlinear wave and Klein-Gordon equations in three space dimensions,}
Math. Z.
{\bf 270} (2012), 487-513.

\bibitem{Kata4}S.~Katayama,
{\em Global existence for systems of nonlinear wave and Klein-Gordon equations with compactly supported initial data,}
Comm. Pure Appl. Anal.
{\bf 17} (2018), 1479-1497.

\bibitem{Kata5}S.~Katayama,
Global Solutions and the Asymptotic Behavior for Nonlinear Wave Equations with Small Initial Data,
MSJ Memoirs {\bf 36},
Math. Soc. Japan, Tokyo
(2017).

\bibitem{KMatsuS}S.~Katayama, A.~Matsumura and H.~Sunagawa,
{\em Energy decay for systems of semilinear wave equations with dissipative structure in two space dimensions,}
Nonlinear Differ. Equ. Appl.
{\bf 22} (2015), 601-628.

\bibitem{KMuroS}S.~Katayama, D.~Murotani and H.~Sunagawa,
{\em The energy decay and asymptotics for a class of semilinear wave equations in two space dimensions,}
J. Evol. Equ.
{\bf 12} (2012), 891–916.

\bibitem{Kla1}S.~Klainerman,
{\em Global existence of small amplitude solutions to nonlinear Klein-Gordon equations in four space-time dimensions,}
Comm. Pure Appl. Math.
{\bf 38} (1985), 631-641.


\bibitem{Kla2}S.~Klainerman,
{\em The null condition and global existence to nonlinear wave equations,}
in: Nonlinear Systems of Partial Differential Equations in Applied Mathematics, Part 1,
Lectures in Applied Math.
{\bf 23} (1986), Amer. Math. Soc., Providence, RI, 293-326.

\bibitem{Kla3}S.~Klainerman,
{\em Remarks on the global Sobolev inequalities in the Minkowski space ${\bf R}^{n+1}$,}
Comm. Pure Appl. Math.
{\bf 40} (1987), 111-117.

\bibitem{Kos}R.~Kosecki,
{\em The unit condition and global existence for a class of nonlinear Klein-Gordon equations,}
J. Differential Equations
{\bf 100} (1992), 257-268.

\bibitem{Kubo}H.~Kubo,
{\em Modification of the vector-field method related to quadratically perturbed wave equations in two space dimensions,}
Advanced Studies in Pure Mathematics, "Asymptotic Analysis for Nonlinear Dispersive and Wave Equations"
{\bf 81} (2019), 139-172.

\bibitem{LFM}P. G. LeFloch and Y. Ma,
The hyperboloidal foliation method,
World Scientific Publishing Co. Pte. Ltd.
Hackensack, NJ(2014).

\bibitem{LinRod}H. Lindblad and I. Rodnianski,
{\em The weak null condition for Einstein’s equations,}
C. R. Math. Acad. Sci. Paris
{\bf 336} (2003), 901–906.

\bibitem{Ma1}Y.~Ma,
{\em Global solutions of quasilinear wave-Klein-Gordon system in two-space dimension: technical tools,}
J. Hyperbolic Differ. Equ.
{\bf 14} (2017), 591-625.

\bibitem{Ma2}Y.~Ma,
{\em Global solutions of quasilinear wave-Klein-Gordon system in two-space dimension: completion of the proof,}
J. Hyperbolic Differ. Equ.
{\bf 14} (2017), 627-670.

\bibitem{Ma3}Y.~Ma,
{\em Global solutions of non-linear wave-Klein-Gordon system in two space dimension: semi-linear interactions,}
arXiv:1712.05315.

\bibitem{Ma4}Y.~Ma,
{\em Global solutions of nonlinear wave-Klein-Gordon system in two spatial dimensions: weak coupling case,}
arXiv:1907.03516.

\bibitem{Ma5}Y.~Ma,
{\em Global solutions of nonlinear wave-Klein-Gordon system in two spatial dimensions: a prototype of strong coupling case,}
J. Differential Equations
{\bf 287} (2021), 236-294.

\bibitem{NS}Y.~Nishii and H.~Sunagawa,
{\em On Agemi-type structural conditions for a system of semilinear wave equations,}
J. Hyperbolic Differ. Equ.
{\bf 17} (2020), 459–473.

\bibitem{NST}Y.~Nishii, H.~Sunagawa and H.~Terashita,
{\em Energy decay for small solutions to semilinear wave equations with weakly dissipative structure,}
J. Math. Soc.
{\bf 73} (2021), 767–779.

\bibitem{OTT}T.~Ozawa, K.~Tsutaya and Y.~Tsutsumi,
{\em Global existence and asymptotic behavior of solutions for the Klein-Gordon equations with quadratic nonlinearity in two space dimensions,}
Math. Z.
{\bf 222} (1996), 341-362.

\bibitem{ST}J. C. H. Simon and E. Taflin,
{\em The Cauchy problem for non-linear Klein-Gordon equations,}
Commun. Math. Pys.
{\bf 152} (1993), 433-478.

\bibitem{Tsu}Y.~Tsutsumi,
{\em Global solutions for the Dirac-Proca equations with small initial data in $3+1$ space time dimensions,}
J. Math. Anal. Appl.
{\bf 278} (1993), 485-499.
\end{thebibliography}
\end{document}